\newcommand{\nc}{\newcommand}
 \nc{\aff}{\mathfrak{aff} } \nc{\bb}{\mathfrak{b} }
\nc{\cc}{\mathfrak{c} }  \nc{\dd}{\mathfrak{d} }
 \nc{\ggo}{\mathfrak{g} }
 \nc{\hh}{\mathfrak{h} }  \nc{\ii}{\mathfrak{i} }
 \nc{\jj}{\mathfrak{j} }  \nc{\kk}{\mathfrak{k} }
\nc{\mm}{\mathfrak{m} }   \nc{\nn}{\mathfrak{n} }
\nc{\pp}{\mathfrak{p} }  \nc{\rr}{\mathfrak{r} } \nc{\sg}{\mathfrak{s} }
 \nc{\sog}{\mathfrak{so} }  \nc{\spg}{\mathfrak{sp} }
 \nc{\sug}{\mathfrak{su} }  \nc{\slg}{\mathfrak{sl} }
 \nc{\tg}{\mathfrak{t} }  \nc{\uu}{\mathfrak{u} }
 \nc{\vv}{\mathfrak{v} } \nc{\ww}{\mathfrak{w} }
 \nc{\zz}{\mathfrak{z} }
 \nc{\ggob}{\overline{\mathfrak{g}}}
\nc{\glg}{\mathfrak{gl} }
\nc{\pca}{\mathcal{P}} \nc{\nca}{\mathcal{N}}
 \nc{\vp}{\varphi} \nc{\ddt}{\frac{{\rm d}}{{\rm d}t}}
 \nc{\la}{\langle} \nc{\ra}{\rangle}
 \nc{\SO}{{\sf SO}} \nc{\Spe}{{\sf Sp}} \nc{\Sl}{{\sf Sl}}
 \nc{\SU}{{\sf SU}} \nc{\Or}{{\sf O}} \nc{\U}{{\sf U}}
 \nc{\Gl}{{\sf Gl}} \nc{\Se}{{\sf S}} \nc{\Cl}{{\sf Cl}}
 \nc{\Spin}{{\sf Spin}} \nc{\Pin}{{\sf Pin}}
 \nc{\RR}{{\mathbb R}} \nc{\HH}{{\mathbb H}} \nc{\CC}{{\mathbb C}}
 \nc{\ZZ}{{\mathbb Z}} \nc{\FF}{{\mathbb F}} \nc{\NN}{{\mathbb N}}
 \nc{\GG}{{\mathbb G}} \nc{\JJ}{{\mathbb J}} \nc{\II}{{\mathbb I}}
 \nc{\KK}{{\mathbb K}} \nc{\DD}{{\mathbb D}}
 \nc{\ad}{\operatorname{ad}} \nc{\Ad}{\operatorname{Ad}}
 \nc{\coad}{\operatorname{coad}} \nc{\ct}{\operatorname{T}}
 \nc{\rank}{\operatorname{rank}} \nc{\Irr}{\operatorname{Irr}}
 \nc{\End}{\operatorname{End}} \nc{\Aut}{\operatorname{Aut}}
 \nc{\Inn}{\operatorname{Inn}} \nc{\Der}{\operatorname{Der}}
 \nc{\Dera}{\operatorname{Dera}} \nc{\Auto}{\operatorname{Auto}}
 \nc{\GL}{\operatorname{GL}}
 \nc{\SL}{\operatorname{SL}}
\newenvironment{skproof}{\noindent \emph{Proof.}}{\hfill $\blacksquare$}
 \renewenvironment{proof}{\noindent \emph{Proof.}}{\hfill $\blacksquare$}
 \newenvironment{lmproof}{\noindent \emph{Proof of  Lemma}}{\hfill $\blacksquare$}
\newcommand{\wtV}{\widetilde V }
 \newcommand{\R}{\mathbb R}
\newcommand{\N}{\mathbb N}
\newcommand{\Z}{\mathbb Z}
\newcommand{\mg}{\mathfrak n }
\newcommand{\mz}{\mathfrak z }
\newcommand{\mh}{\mathfrak h }
\newcommand{\ma}{\mathfrak a }
\newcommand{\mgg}{\mathfrak n }
 \theoremstyle{plain}
 \newtheorem{teo}{Theorem}[section]
 \newtheorem{pro}[teo]{Proposition}
 \newtheorem{corollary}[teo]{Corollary}
 \newtheorem{lem}[teo]{Lemma}
 \renewenvironment{proof}{\noindent \emph{Proof.}}{\hfill $\blacksquare$}
 \theoremstyle{remark}
 \newtheorem*{remark}{Remark}
 \newtheorem{example}[teo]{Example}
\newcommand{\lra}{\longrightarrow}
\begin{document}

\title[On a spectral sequence for the cohomology of a nilpotent Lie algebra]
{On a spectral sequence for the cohomology of a nilpotent Lie algebra}

\author{Viviana del Barco}
\address{Viviana del Barco. Facultad de Ciencias Exactas Ingenier\'ia y Agrimensura, Universidad Nacional de Rosario, Av. Pellegrini 250, (2000) Rosario, Argentina.}
\email{delbarc@fceia.unr.edu.ar}

\thanks{Partially supported by CONICET, SCyT-UNR and Secyt-UNC}






\commby{}


\begin{abstract} Given a nilpotent Lie algebra $\mathfrak n$ we construct a spectral sequence which is derived from a filtration of its Chevalley-Eilenberg differential complex and converges to the Lie algebra cohomology of $\mathfrak n$. The limit of this spectral sequence gives a grading for the Lie algebra cohomology, except for the cohomology groups of degree $0$, $1$, $\dim \mg-1$ and $\dim\mg$ as we shall prove. We describe the spectral sequence associated to a nilpotent Lie algebra which is a direct sum of two ideals, one of them of dimension one, in terms of the spectral sequence of the co-dimension one ideal. Also, we compute the spectral sequence corresponding to each real nilpotent Lie algebra of dimension less than or equal to six. 
\medskip

\noindent  MSC (2010): 17B30, 17B56, 55T05.

\noindent Keywords: Nilpotent Lie algebras, Lie algebra cohomology, spectral sequences.
\end{abstract}

\maketitle
\section{Introduction}
In the present work we associate a cohomological spectral sequence to each nilpotent Lie algebra $\mg$. This association is achieved by considering a natural filtration of the Chevalley-Eilenberg differential complex of $\mg$, namely that one defined by the annihilator spaces of the central descending series. The spectral sequence $\{E_r^{p,q}\}$ induced by this filtration converges to the Lie algebra cohomology with trivial coefficients $H^*(\mg)$, yielding the formula 
\[ H^i(\mgg)\simeq \bigoplus _{p+q=i} E_\infty^{p,q}.\qquad (*)\]

Thus one obtains a refinement of the Lie algebra cohomology of $\mg$ by using a na\-tural filtration on the space of 1-forms. The dimensions of the limit terms of the spectral sequence are invariants of $\mg$ from which its Betti numbers can be recovered. In this work we investigate this spectral sequence in the aim to contribute on a further study of the cohomology of nilpotent Lie algebras. Also, an application we pursue is to establish how this spectral sequence constraints the existence of certain geometric structures on $\mg$.

It is known that, contrary to the semisimple case, the explicit calculation of the cohomology of a nilpotent Lie algebra is a hard task in general. Nevertheless a few  particular cases are well understood, such as for Heisenberg Lie algebras \cite{SA}, free 2-step nilpotent Lie algebras \cite{GR-KI-TI,SI} and small dimensional Lie algebras, among others. Also, the cohomology of the nilradicals of parabolic subalgebras in semisimple Lie algebras has been described by Kostant \cite{KO}.

Recall that nilpotent Lie algebras are classified up to dimension seven (see \cite{CI-GR-SH,GO-KH,MA} and references therein). An outstanding conjecture involving the cohomology of nilpotent Lie algebras is the Toral Rank Conjecture \cite{CA-JE,DE-SI,PO,PO-TI2,TI}, which is still open in the general case. It was posed by S. Halperin in 1968 \cite{HA} and states that the total dimension of the Lie algebra cohomology group of $\mgg$ is greater than the total dimension of the Lie algebra cohomology group of the center of $\mgg$. 
Introducing new tools to study the cohomology of nilpotent Lie algebras would feasibly lead to future advances on these problems.

The construction of the spectral sequence under consideration was proposed by S. Salamon in his private correspondence with I. Dotti. In this paper we introduce its formal definition and we 
investigate properties of this spectral sequence in relation to the structure of the nilpotent Lie algebra.

We include here the computation of the spectral sequence of each real nilpotent Lie algebra of dimension $\leq$ 6. Despite the fact that their real cohomology is well known, we find this a necessary step to get conclusions on the properties of $\mg$ that this spectral sequence might reveal. One sees that there are nilpotent Lie algebras having the same Betti numbers but different gradings in (*), showing that the grading given by the spectral sequence is, in fact, more refined that the usual cohomology degree. Also we were able to observe that non-symplectic Lie algebras have zero term $E_\infty^{0,2}$. This yields to a general obstruction to the existence of symplectic structures for  nilmanifolds, a result which is fully developed by the author in \cite{dB2}.

Throughout this work elementary knowledge of spectral sequences will be assumed. 

\medskip

 {\bf Acknowledgments.} This work is part of my Ph.D. Thesis written at FCEIA, Universidad Nacional de Rosario, Argentina and under the supervision of Isabel Dotti. I am very grateful to Isabel Dotti for all her dedication to the guidance work. I am also thankful to Simon Salamon for suggestions which helped me improve the results presented here.

\section{Definition of the spectral sequence}

Let $\mgg$ denote a real Lie algebra. The central descending series of $\mgg$, $\{\mgg^i\}$ is given by
\[\mgg^0=\mgg,\quad  \mgg^i=[\mgg,\mgg^{i-1}],\;\; i\geq 1.
\]
A Lie algebra $\mg$ is $k$-step nilpotent if $\mgg^k=0$ and $\mgg^{k-1}\neq 0$; this number $k$ is called the {\em nilpotency index} of $\mg$. For example, abelian Lie algebras are 1-step nilpotent. For a $k$-step nilpotent Lie algebra it holds $\mgg^{k-1}\subseteq \mz(\mg)$, where $\mz(\mg)$ denotes the center of $\mg$.

\smallskip

The Chevalley-Eilenberg complex of $\mgg$ is
\begin{equation}\label{eq:complex}\R \stackrel{d=0}{\lra} \mgg^*=C^1(\mgg)\stackrel{d}{\lra}C^2(\mgg)\stackrel{d}{\lra}\ldots \stackrel{d}{\lra}C^p(\mgg)\stackrel{d}{\lra}\ldots\quad\end{equation}
where $C^p(\mgg)$ denotes the vector space of skew-symmetric $p$-linear forms on $\mgg$ identified with the exterior product $\Lambda^p\mgg^*$, and the differential $d:\Lambda^{p}\mgg^*\lra \Lambda^{p+1}\mgg^*$ is defined by:
\[ d_p x\,(u_1,\ldots,u_{p+1})=\sum_{1\leq i<j\leq p+1}(-1)^{i+j-1}x([u_i,u_j],u_1,\ldots,\hat{u_i},\ldots,\hat{u_j},\ldots,u_{p+1}). \]

Notice that the differential $d:\mgg\lra \Lambda^2\mgg^*$ coincides with the dual mapping of the Lie bracket $[\,,\,]:\Lambda^2\mgg\lra \mgg$ and for $x,y\in\Lambda^*(\mgg^*)$ we have
\[d(x\wedge y)=dx\wedge y +(-1)^{deg\,x}x\wedge dy. \]

The cohomology of $(C^*(\mgg),d)$ is called the {\em Lie algebra cohomology (with trivial coefficients)} of $\mgg$ and it is denoted by $H^*(\mgg)$.

\smallskip
The following subspaces of the dual of a nilpotent Lie algebra were considered in \cite{SA1} and they lead us to a filtration of the complex in (\ref{eq:complex}). Set
\begin{equation} \label{eq:eq53} 
V_0=0 \quad \mbox{ and } \quad V_i=\{ x \in \mgg^*: d x \in \Lambda^2V_{i-1}\}\qquad i\geq 1.
\end{equation}Notice that $V_1$ is the space of closed 1-forms and that $V_0 \subseteq V_1\subseteq \cdots \subseteq V_i\subseteq \cdots\subseteq \mg^* $. These spaces are dual to those in the central descending series \cite{SA1}. That is, for each $i\geq 0$, $V_i=\{x\in \mgg^*\,:\,x(u)=0,\,\,\forall \,u\in \mgg^i\}=(\mgg^i)^\circ$. In particular, $\mgg$ is $k$-step nilpotent if and only if $V_k=\mgg^*$ and $V_{k-1}\neq \mgg^*$; we define $V_i=0$ for $i<0$.

\medskip

Let $\mgg$ be a $k$-step nilpotent Lie algebra of dimension $m$. The sequence in (\ref{eq:eq53}) defines a filtration in the space of skew symmetric $q$-forms, $\Lambda^q\mg^*$, for all $q=0,\ldots,m$,
\begin{equation}\label{l4}
0=\Lambda^q V_0 \subsetneq \Lambda^q V_1 \subsetneq \ldots \subsetneq \Lambda^q V_{k-1} \subsetneq \Lambda^q V_k=\Lambda^q \mg^*.
\end{equation}
Also $d(\Lambda^qV_i)\subset \Lambda^{q+1} V_i$ so each of these subspaces is invariant under the Lie algebra differential. Thus for each $p$
 \begin{equation}\label{eq7}F^pC^*: \quad 0 \lra \R \lra V_{k-p} \lra \Lambda^2 V_{k-p} \lra \cdots \lra \Lambda^m V_{k-p} \lra 0 \end{equation}
  is a subcomplex of the Chevalley-Eilenberg complex. 
  Observe that $F^p C=0$ (the zero complex) if $p> k$ and $F^p C=C^*$ if $p<0$, hence
  \[\ldots\subseteq 0 \subseteq F^{k-1}C^*\subseteq \ldots \subseteq F^{p+1}C^*\subseteq F^{p}C^*\subseteq \ldots \subseteq F^{1}C^*\subseteq C^*\subseteq \ldots\quad.\]
   Therefore $\{F^pC^*\}$ constitutes a bounded filtration of the complex (\ref{eq:complex}) which gives rise to a spectral sequence $\{E_r ^{p,q}\}
   $  where for $p,q \,\in\Z$ it holds 
\begin{equation}\label{eq:E}
 E_0^{p,q} = \frac{F^pC^{p+q}}{F^{p+1}C^{p+q}}, 
\end{equation}
and $d_0^{p,q}:E_0^{p,q}\lra E_0^{p,q+1}$ is the quotient map induced by the Lie algebra differential. For further details on spectral sequences we refer the reader to \cite[Sect. 5.4]{WE}.
 
 \smallskip
For any nilpotent Lie algebra $\mgg$ the filtration and the spectral sequence that arise in this natural way are bounded because the dimension of $\mgg$ and its nilpotency index are finite. It is well known that given a cohomological complex and a filtration of it, the  associated spectral sequence converges to the cohomology of the original complex. In this case the spectral sequence in (\ref{eq:E}) converges to the Lie algebra cohomology of $\mgg$. This fact is denoted by \[E_r^{p,q}\Rightarrow H^*(\mgg).\]

An isomorphism between two nilpotent Lie algebras preserves the filtrations and induces a spectral sequence isomorphism. This implies that there is a spectral sequence associated to each isomorphism class of nilpotent Lie algebras.
 
\medskip
The zero page of the spectral sequence defined here satisfies
	\begin{equation} \label{l6}
E_0^{p,q} = \frac{\Lambda^{p+q}{V_{k-p}}}{\Lambda^{p+q}{V_{k-(p+1)}}} \qquad \mbox{if}\quad0\leq p\leq k-1, \end{equation} while $E_0^{p,q}=0$ if $p<0$ or $p\geq k$. Thus the (possibly) non-zero elements of total degree $n$ in the $r$-th page are $E^{0,n}_r,$ $E^{1,n-1}_r,\ldots,$ $E_r^{k-1,n-k+1}$ where $k$ is the nilpotency index of $\mgg$.

Each term of the spectral sequence can be computed directly by calculating the Lie algebra differential $d$ paying attention to the filtration. Explicitly,
\begin{equation}\label{eq5}E_r^{p,q}\simeq\frac{A_r^{p,q}}{d(A_{r-1}^{p-r+1,\,q+r-2})+A_{r-1}^{p+1,\,q-1}}\end{equation}
where $A_r^{p,q}=\{x \in \Lambda^{p+q}V_{k-p}:dx \in \Lambda^{p+q+1}V_{k-p-r}\}$. The limit terms of the spectral sequence are
\begin{equation}\label{eq6}E_\infty^{p,q}\simeq\frac{\{x \in \Lambda^{p+q}V_{k-p}:dx =0\}}{d(\{x\in\Lambda^{p+q-1}\mgg^*:dx\in \Lambda^{p+q}V_{k-p}\})+\{x\in\Lambda^{p+q}V_{k-p-1}:dx=0\}}. \end{equation}

\medskip
In the next example we deal with a family of filiform Lie algebras. For each Lie algebra of the family we compute the limit terms of the spectral sequence associated to it with total degree 0,1 and 2; for total degree 2 this is done using previous results on their cohomology. 
\begin{example}\label{pro:pro2} 
Let $m\in\N$ and denote $\mm_0(m)$ the real Lie algebra having a basis of 1-forms $\{e^1,\ldots,e^m\}$ with differential $d:\mg^*\lra \Lambda^2\mg^*$ determined by the following formula \begin{equation} de^i=\left\{
\begin{array}{cl}
0 & \mbox{if} \;\;i=1,2, \\
e^1\wedge e^{i-1} & \mbox{if}\;\; i=3,\ldots,m.
\end{array}
\right. \label{eq25}
\end{equation} 
The Lie algebra $\mm_0(m)$ is filiform, that is, its nilpotency index $k$ is $m-1$, and its cohomology is studied by D. Millionschikov in \cite{MI}. 

Let $\{E_r^{p,q}\}$ be the spectral sequence associated to $\mm_0(m)$. The filtration of the dual of $\mm_0(m)$ is $V_i=span\{e^1,\ldots,e^{i+1}\}$ for $i=1,\ldots,m-1$. Using formula in (\ref{eq6}) it is easy to see that $E_\infty^{p,-p}\simeq \R$ if $p=k-1=m-2$ and $E_\infty^{p,-p}= 0$ otherwise. Also, there is only one non-zero term of total degree 1 in the limit of $\{E_r^{p,q}\}$. Indeed, 
\[E_\infty^{p,1-p}\simeq\frac{\{x \in V_{k-p}:dx =0\}}{\{x\in V_{k-p-1}:dx=0\}}, \] 
which is zero if $k-p\geq 2$ while $E_\infty^{k-1,2-k}\simeq V_1$. For any nilpotent Lie algebra the limit of the spectral sequence associated to it has only one non-zero element of total degree 0 and only one of total degree 1 as we shall see in the next section.

According to \cite[Lemma 5.2]{MI}, the second cohomology group $H^2(\mm_0(m))$ admits a basis composed by the cohomology class of $e^1\wedge e^m$ and those of $\omega_s$, where
\[\omega_s=\frac12 \sum_{i=2}^{2s-1}(-1)^i\,e^i\wedge e^{2s+1-i},\;\; s=2,\ldots, 
[(m+1)/2]; \] the brackets denote the integer part.
Each $\omega_s$ is closed and non-exact. 

This basis of $H^2(\mm_0(m))$ can be used to compute the degree 2 terms of the limit of spectral sequence $\{E_r^{p,q}\}$; namely, $E^{0,2}_\infty,$ $E^{1,1}_\infty,\ldots,E_\infty^{m-2,4-m}$. Recall from Eq. (\ref{eq6}) 
\[E_\infty^{p,2-p}\simeq\frac{\{x \in \Lambda^{2}V_{m-1-p}:dx =0\}}{d(V_{m-p})+\{x\in\Lambda^{2}V_{m-p-2}:dx=0\}}\;\mbox{ for } p=0,\ldots,m-2.\]
Since $V_i=span\{e^1,\ldots,e^{i+1}\}$ for $i=1,\ldots,m-1$, it holds
\[e^1\wedge e^m \in\Lambda^2V_{m-1},\; \mbox{ while }\;\omega_s\in \Lambda^2 V_{2s-2}, \quad s=2,\ldots,[(m+1)/2];\]
also $\omega_s\notin \Lambda^2 V_{2s-3}$. Hence $e^1\wedge e^{m}$ defines a non-trivial element in $E_\infty^{0,2}$. In addition, $\omega_s$ defines a non-zero element in $E_\infty^{p,2-p}$ if and only if $p=m-2s-1$. This gives different situations depending on whether $m$ is even or odd. In fact
\begin{equation}\label{eq20}
\dim E_\infty^{0,2}= \left\{\begin{array}{cl}
1 & \mbox{if }m \mbox{ is even},\\
2 & \mbox{if }m \mbox{ is odd},\end{array}\right. \;\mbox{and}\;
\dim E_\infty^{p,2-p}=\left\{\begin{array}{cl}
0 & \mbox{if } p\equiv m \,(\mbox{mod }2)\\
1 & \mbox{if } p\not\equiv m \,(\mbox{mod }2)\end{array}\right. \end{equation}
for $p=1,\ldots,m-2$.
\end{example}

\section{General properties}
In the previous section, to each nilpotent Lie algebra $\mgg$ was associated a natural filtration of the Chevalley-Eilenberg differential complex. This filtration gives rise to a spectral sequence that converges to the Lie algebra cohomology. Thus each cohomology group $H^i(\mgg)$ can be written as a direct sum of the limit terms of the spectral sequence. Namely
\begin{equation}
\label{eq8}H^i(\mgg)\simeq \bigoplus _{p+q=i} E_\infty^{p,q} \quad \text{ for all } i=0,\ldots, \dim \mg.
\end{equation} 
Thus the spaces $E_\infty^{p,q}$ refine the Lie algebra cohomology of $\mg$ and, in view of Nomizu's theorem \cite{NO}, the de Rham cohomology of a nilmanifold $\Gamma\backslash N$ where $\mg$ is the Lie algebra of $N$. Because of this essential property, we might call them the {\em intermediate cohomology groups} of $\mg$ (or $N$). \medskip

Computing all terms of the spectral sequence associated to a nilpotent Lie algebra $\mgg$ using the definition or Eqns. (\ref{eq5}), (\ref{eq6}) might take a large amount of work. For this reason we introduce properties that intend to facilitate this work. First, we review results on closed and exact $m-1$-forms of an $m$ dimensional nilpotent Lie algebra.

\begin{lem}\label{lm1}
Let $\mg$ be an $m$ dimensional $(r+1)$-step nilpotent Lie algebra with differential $d:\mg^*\lra \Lambda^2\mg^*$. Let $n_0$ be the dimension of $V_1=\ker d$ and denote $\beta_1,\,\beta_2,\ldots,\,\beta_{n_0}$ a basis of $V_1$. Then every $\sigma \in \Lambda^{m-1}\mg^*$ is closed. Moreover $\sigma$ is exact if and only if it is divisible by $\beta_1\wedge \beta_2\wedge\cdots\wedge\beta_{n_0}$.
\end{lem}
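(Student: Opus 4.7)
My plan splits the lemma into its two assertions. For the closedness claim I would invoke that nilpotent Lie algebras are unimodular: for any basis $e_1,\dots,e_m$ of $\mg$ with dual basis $e^1,\dots,e^m$, a direct computation from the Chevalley-Eilenberg formula gives
\[
d(e^1\wedge\cdots\wedge\widehat{e^i}\wedge\cdots\wedge e^m) \;=\; (-1)^{i-1}\,\operatorname{tr}(\ad e_i)\,e^1\wedge\cdots\wedge e^m,
\]
and since every $\ad e_i$ is a nilpotent endomorphism its trace vanishes. As these $m$ forms span $\Lambda^{m-1}\mg^*$, the differential is identically zero on that space.

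For the divisibility criterion I would complete $\beta_1,\dots,\beta_{n_0}$ to a basis of $\mg^*$ by elements $\alpha_1,\dots,\alpha_p$, where $p=m-n_0$, and set $\Omega=\beta_1\wedge\cdots\wedge\beta_{n_0}\wedge\alpha_1\wedge\cdots\wedge\alpha_p$, a generator of $\Lambda^m\mg^*$. The induced basis of $\Lambda^{m-1}\mg^*$ consists of the $n_0$ forms $\sigma_i$ missing a single $\beta_i$ and the $p$ forms $\omega_j$ missing a single $\alpha_j$; the $\omega_j$'s precisely span the subspace of $(m-1)$-forms divisible by $\beta_1\wedge\cdots\wedge\beta_{n_0}$, a subspace of dimension $p$. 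A direct expansion shows that for $\sigma=\sum_k a_k\sigma_k+\sum_j b_j\omega_j$ one has $\beta_i\wedge\sigma=\pm a_i\,\Omega$, so $\sigma$ is divisible by $\beta_1\wedge\cdots\wedge\beta_{n_0}$ if and only if $\beta_i\wedge\sigma=0$ for every $i=1,\dots,n_0$.

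The inclusion ``exact implies divisible'' is then a one-line Leibniz argument: for $\tau\in\Lambda^{m-2}\mg^*$ the form $\beta_i\wedge\tau$ lies in $\Lambda^{m-1}\mg^*$ and so is closed by the first part, and since $d\beta_i=0$ one gets $0=d(\beta_i\wedge\tau)=-\beta_i\wedge d\tau$ for each $i$. The reverse inclusion I would obtain by a dimension count: Poincar\'e duality for unimodular Lie algebras yields $\dim H^{m-1}(\mg)=\dim H^1(\mg)=n_0$, hence $\dim B^{m-1}(\mg)=m-n_0=p$, matching the dimension of the subspace of divisible $(m-1)$-forms; combined with the previous inclusion this forces equality. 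The main obstacle is this last step, since Poincar\'e duality is invoked as a black box and has not been set up in the paper; a more elementary alternative would be to write down explicit primitives $\tau_j\in\Lambda^{m-2}\mg^*$ with $d\tau_j=\pm\omega_j$ for each $j$, but the dimension count via the standard duality for unimodular Lie algebras is much cleaner.
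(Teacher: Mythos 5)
Your proof is correct, and for the first two assertions it takes a genuinely different route from the paper's. For closedness the paper does not argue via unimodularity: it chooses complements $\ma^i$ of the terms of the descending central series, decomposes $\Lambda^{m-1}\mg^*$ into multigraded pieces $\Lambda^{i_0,\ldots,i_r}$, and shows by bookkeeping on the multi-degrees that every component of $d\eta$ would have to live in a piece that is forced to vanish; your computation $d(e^1\wedge\cdots\wedge\widehat{e^i}\wedge\cdots\wedge e^m)=\pm\operatorname{tr}(\ad e_i)\,e^1\wedge\cdots\wedge e^m$ together with the vanishing of traces of nilpotent operators is shorter and more standard. For ``exact implies divisible'' the paper again tracks multi-degrees of $d\eta$ for $\eta$ of total degree $m-2$ and concludes that the $(\ma^0)^*$-degree of any exact $(m-1)$-form equals $n_0$; your criterion that $\sigma$ is divisible by $\beta_1\wedge\cdots\wedge\beta_{n_0}$ if and only if $\beta_i\wedge\sigma=0$ for all $i$, combined with the Leibniz rule and the already-established closedness of $(m-1)$-forms, is cleaner and avoids the combinatorics entirely. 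For the converse implication the two arguments coincide: the paper also deduces $\dim B^{m-1}=m-n_0$ from Poincar\'e duality $\dim H^{m-1}(\mg)=\dim H^1(\mg)=n_0$ and matches this against the dimension of the space of divisible $(m-1)$-forms, so your reservation about invoking duality as a black box is unnecessary --- the paper relies on it in exactly the same way and cites Koszul \cite{KOZ} for it. The net effect is that your argument is more elementary and self-contained in its first two steps and identical in the last; what the paper's multigrading buys is a uniform framework in which both the closedness and the divisibility statements fall out of the same degree count.
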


\begin{remark} This lemma joins a result due to Benson and Gordon in \cite{BE-GO} and its converse proved by Yamada in \cite{YA}. In both works, the authors add the hypothesis that $\mg$ is a symplectic Lie algebra. Nevertheless we noticed that this is valid for any nilpotent Lie algebra. We include the proof here to make this fact explicit and for completeness of the exposition.\end{remark}

\begin{lmproof} {\em \ref{lm1}}.
Let $\ma^i$ denote a vector space complementary to $\mg^{i+1}$ in $\mg^i$ where $\mg^i$ is the $i$-th term of the central descending series of $\mg$. For $i=0,1,\ldots,r$
\[\mg^{i}=\mg^{i+1}+\;\ma^{i};\] define $n_i=\dim \,\ma^{i}$. Denote
 \[\Lambda^{i_0,i_1,\ldots,i_{r}}=\Lambda^{i_0}(\ma^0)^*\wedge \Lambda^{i_1}(\ma^1)^*\wedge \cdots \wedge \Lambda^{i_{r}}(\ma^{r})^*\,\subseteq\,\Lambda^{i_0+i_1+\ldots+i_{r}}\mg^*.\]
Then for $s=0,\ldots,m$
\[ \Lambda^s\mg^*=\sum_{i_0+i_1+\ldots+i_{r}=s}\Lambda^{i_0,i_1,\ldots,i_{r}}.\]
Notice that it is possible to choose $\ma^0$ such that $(\ma^0)^*=V_1$. Let $\beta_1,\,\beta_2,\ldots,\,\beta_{n_0}$ be a basis of $(\ma^0)^*$.

Assume $\eta\in\Lambda^{i_0,\ldots,i_{r}}$ then each term of $d\eta$ belongs to a subspace $\Lambda^{t_0,t_1,\ldots,t_r}$ such that there exists an index $j\geq 1$ with $t_j=i_j-1$ and $t_0+t_1+\ldots+t_{j-1}=i_0+i_1+\ldots+i_{j-1}+2$.

Suppose in particular that $\eta\in\Lambda^{m-1}\mg^*$, this implies $i_0+i_1+\ldots+i_{r}=m-1$. Then for any $j\geq 1$ we have $i_0+i_1+\cdots +i_{j-1}\geq n_0+n_1+\cdots +n_{j-1}-1$. In fact if for some $j$ holds $i_0+i_1+\cdots +i_{j-1}< n_0+n_1+\cdots +n_{j-1}-1$, then $\sum_{l=1}^r i_l<\sum_{l=1}^r n_l-1=2m-1$ leading to a contradiction.
Fixed the term of $d\eta$ in $\Lambda^{t_0,t_1,\ldots,t_r}$, let $j$ be the index specified in the previous paragraph. It verifies $t_0+t_1+\ldots+t_{j-1}=i_0+i_1+\ldots+i_{j-1}+2>n_0+n_1+\cdots+n_{j-1}$ implying $d\eta=0$. Therefore any $m-1$-form is closed.

Let $\sigma=d\alpha$ be an $m-1$-exact form. Clearly each component $\eta$  of $\alpha$ belongs to some $\Lambda^{i_0,\ldots,i_{r}}$ with $i_0+\ldots+i_{r}=m-2$. As before, the term of $d\eta$ belonging to $\Lambda^{t_0,t_1,\ldots,t_r}$ satisfies $t_0+ t_1+\cdots+t_j=n_0+n_1+\cdots+n_j$ for all $j\geq 1$. In particular, $t_0=n_0$ which implies that $\sigma$ is divisible by $\beta_0\wedge\cdots  \wedge \beta_{n_0}$.

To prove that any $m-1$-form divisible by $\beta_0\wedge\cdots  \wedge \beta_{n_0}$ is exact, a dimensional argument is used. Denote $B^{m-1}$ the set of exact $m-1$-forms, then $B^{m-1}$ is a subset of $ \sum_{n_0+i_1+\cdots+i_r=m-1} \Lambda^{n_0,i_1,\ldots,i_r}$. From Poincar\'{e} duality $\dim H^{m-1}(\mg)=\dim H^1(\mg)=n_0$.
Moreover, the result above states $\dim H^{m-1}(\mg)=\dim \Lambda^{m-1}\mg^*-\dim B^{m-1}$. Combining these two formulas one leads to $\dim B^{m-1}=n_1+\cdots+n_r$ and therefore \linebreak $ B^{m-1}=\sum_{n_0+i_1+\cdots+i_r=m-1} \Lambda^{n_0,i_1,\ldots,i_r}$.
\end{lmproof}

\medskip

The next result states that the limit terms of the spectral sequence of total degree $0,\,1,\,\dim \mg-1$ and $\dim\mg$ are mostly zero.
\begin{teo}\label{pro:pro3} Let $\mg$ be an $m$ dimensional $k$-step nilpotent Lie algebra with diffe\-rential $d:\mg^*\lra\Lambda^2\mg^*$ and let $\{E_r^{p,q}\}$ be the spectral sequence associated to $\mg$. Then
\begin{enumerate}
\item  $E_\infty^{k-1,1-k}=H^0(\mg)=\R$ and hence $E_\infty^{p,-p}(\mg)=0 $ for all $p=0,\ldots, k-2$.
\item  $E_\infty^{k-1,2-k}=H^1(\mg)=\ker d$ and hence $E_\infty^{p,1-p}(\mg)=0 $ for all $p=0,\ldots, k-2$.
\item $E_\infty^{0,m-1}=H^{m-1}(\mg)$ and hence $E_\infty^{p,m-1-p}=0$ for all $p=1,\ldots ,k-1$.
\item $E_\infty^{0,m}\simeq H^m(\mg)\simeq\R$ and hence $E_\infty^{p,m-p}=0$ for all $p=,1\ldots, k-1$.
\end{enumerate}
\end{teo}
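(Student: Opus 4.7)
The plan is to verify each of the four assertions by direct inspection of the explicit formula (\ref{eq6}) for $E_\infty^{p,q}$, combined with the chain $V_1 \subseteq V_2 \subseteq \cdots \subseteq V_{k-1} \subseteq V_k = \mg^*$ and, for parts (3) and (4), the information furnished by Lemma \ref{lm1}.

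For (1) and (2) the argument is a short calculation. In (1), formula (\ref{eq6}) presents $E_\infty^{p,-p}$ as a quotient with numerator $\Lambda^0 V_{k-p} = \R$ (the constants, automatically closed) and denominator containing $\Lambda^0 V_{k-p-1}$; the latter equals $\R$ for $p \leq k-2$ and vanishes for $p = k-1$, using the convention $\Lambda^q V_0 = 0$ implicit in (\ref{l4}). In (2), using $d|_{\Lambda^0\mg^*} = 0$, the formula reduces to $(V_{k-p} \cap V_1)/(V_{k-p-1} \cap V_1)$; since $V_1 \subseteq V_{k-p}$ for $p \leq k-1$, the numerator equals $V_1$, while the denominator equals $V_1$ for $p \leq k-2$ and $\{0\}$ for $p=k-1$. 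Both parts follow at once.

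For (3) and (4) I would first record the strict descent $\mg \supsetneq \mg^1 \supsetneq \cdots \supsetneq \mg^{k-1} \supsetneq 0$: an equality $\mg^i = \mg^{i+1}$ at any $0 \leq i \leq k-1$ would iterate to force $\mg^{k-1} = 0$, contradicting the nilpotency index. Hence $\dim \mg^{k-p} \geq p$, so $\dim V_{k-p} \leq m-p$ for $1 \leq p \leq k-1$. This immediately yields $\Lambda^m V_{k-p} = 0$ for $p \geq 1$ and $\Lambda^{m-1} V_{k-p} = 0$ for $p \geq 2$, killing all limit terms in (4) with $p \geq 1$ and those in (3) with $p \geq 2$. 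Convergence (\ref{eq8}) then reduces (4) to computing $H^m(\mg)$; by Lemma \ref{lm1} the map $d \colon \Lambda^{m-1}\mg^* \to \Lambda^m\mg^*$ is identically zero, so $H^m(\mg) = \Lambda^m\mg^* \simeq \R$.

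The only delicate case, and the one I expect to be the main obstacle, is the borderline $p=1$ in (3), which is nontrivial precisely when $\dim \mg^{k-1} = 1$ and hence $\dim V_{k-1} = m-1$. Here $\Lambda^{m-1} V_{k-1}$ is one-dimensional, so the numerator of $E_\infty^{1,m-2}$ does not vanish and one must show its generator lies in the denominator. Exploiting $V_1 \subseteq V_{k-1}$, I would choose a basis $\{\beta_1,\ldots,\beta_{m-1}\}$ of $V_{k-1}$ whose first $n_0$ elements span $V_1$; the generator $\beta_1 \wedge \cdots \wedge \beta_{m-1}$ is then divisible by $\beta_1 \wedge \cdots \wedge \beta_{n_0}$ and therefore exact by Lemma \ref{lm1}. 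Writing it as $d\alpha$, the condition $d\alpha \in \Lambda^{m-1} V_{k-1}$ is automatic, so the generator belongs to the first summand of the denominator in (\ref{eq6}). Thus $E_\infty^{1,m-2} = 0$, and convergence yields $H^{m-1}(\mg) \simeq E_\infty^{0,m-1}$, finishing (3).
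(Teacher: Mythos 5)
Your proposal is correct, and it rests on the same two pillars as the paper's proof: the direct computation from the limit formula for total degrees $0$ and $1$, and Lemma \ref{lm1} together with convergence for total degrees $m-1$ and $m$. The only real difference is the direction in which convergence is applied in parts (3) and (4). The paper computes the $p=0$ terms head-on --- $E_\infty^{0,m}\simeq H^m$ via Koszul's description of the top class, and $E_\infty^{0,m-1}=\Lambda^{m-1}\mg^*/(B^{m-1}+\Lambda^{m-1}V_{k-1})=H^{m-1}$ by showing $\Lambda^{m-1}V_{k-1}\subseteq B^{m-1}$ --- and then lets $H^i\simeq\bigoplus_{p+q=i}E_\infty^{p,q}$ kill all the remaining summands at once. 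You instead kill the $p\geq 1$ terms directly and recover the $p=0$ term from convergence; this forces you to introduce one ingredient the paper does not need, namely the strictness of the central descending series giving $\dim V_{k-p}\leq m-p$, and to treat the borderline term $E_\infty^{1,m-2}$ separately. Your handling of that term (the generator of $\Lambda^{m-1}V_{k-1}$ is divisible by $\beta_1\wedge\cdots\wedge\beta_{n_0}$, hence exact, hence in the denominator) is exactly the paper's observation that $\Lambda^{m-1}V_{k-1}=\Lambda^{n_0,\ldots,n_{k-2},0}\subseteq B^{m-1}$, just localized to the one term where it is needed. A small bonus of your route is that $H^m(\mg)\simeq\R$ falls out of Lemma \ref{lm1} itself (every $(m-1)$-form is closed, so nothing in top degree is exact), making the argument self-contained where the paper cites Koszul.
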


\begin{proof} The first two assertions follow straight from the definition of the spectral sequence of $\mg$. Moreover, it is well known that the top cohomology group $H^m(\mgg)$ of a nilpotent Lie algebra $\mgg$ is spanned by a class of the form $e^1\wedge \ldots\wedge e^m$ if $\{e^1,\ldots,e^m\}$ is a basis of $\mgg^*$ \cite{KOZ}. This implies $E_\infty^{0,m}\simeq H^m(\mg)\simeq\R$ which combined with Eq. (\ref{eq8}) proves (4).
 
To prove (3), we make use of the lemma above and Eq. (\ref{eq6}) which together imply \[E_\infty^{0,m-1}=
\frac{\Lambda^{m-1}\mg^*}{B^{m-1}+\Lambda^{m-1}V_{k-1}}.\]
It is sufficient to show that $\Lambda^{m-1}V_{k-1}\subseteq B^{m-1}$ since it implies \[E_\infty^{0,m-1}=\frac{\Lambda^{m-1}\mg^*}{B^{m-1}}=H^{m-1}(\mg).\] 

Notice that if $\dim V_{k-1}<m-1$ then $\Lambda^{m-1}V_{k-1}=0$ and it is clearly contained in $B^{m-1}$. When $\dim V_{k-1}=m-1$, $\dim \ma^{k-1}=1$ (in the notation of Lemma \ref{lm1}) and the subspace $\Lambda^{m-1}V_{k-1}$ coincides with $\Lambda^{n_0,n_1,\ldots,n_{k-2},0}$ also a subset of $B^{m-1}$ because of the lemma above. Thus $E_\infty^{0,m-1}=H^{m-1}(\mg)$ in any case.
Using Eq. (\ref{eq8})
one concludes that
$E_\infty^{p,m-1-p}=0$ for all $1\leq p\leq k-1$.
\end{proof}

\medskip

In Lie algebra cohomology the well known K\"unneth formula relates the Lie algebra cohomology of a Lie algebra which is a direct sum of ideals, with the cohomology of its summands. 

When the nilpotent Lie algebra $\mgg$ can be decomposed as a direct sum of two ideals being one of them of dimension one, a similar formula relates the spectral sequence of $\mgg$ with those of its summands.

\begin{teo}\label{teo:teo24} Let $\mg$ be a $k$-step nilpotent Lie algebra which can be decomposed as a direct sum of ideals $\mg=\R\oplus\mh$. Denote by $\{E_r^{p,q}\}$ and $\{\tilde{E}_r^{p,q}\}$ the spectral sequences associated to $\mg$ and to $\mh$ respectively. Then it holds
\begin{enumerate}
\item $E_r^{p,q}=0$ if $p+q<0$,
\item $E_r^{k-1,1-k}\simeq\R$ and $E_r^{p,-p}=0$ for all $p\neq k-1$. 
\item $E_r^{k-1,2-k}\simeq \tilde{E}_r^{k-1,2-k}\oplus \R$,
\item $E_r^{p,1-p}\simeq \tilde{E}_r^{p,1-p}$ if $0\leq p \leq k-2$ and $E_r^{p,1-p}=0$ if $p<0$ or $p\geq k$.
\item $E_r^{p,q}\simeq \tilde{E}_r^{p,q}\oplus \tilde{E}_r^{p,q-1}$ if $p+q\geq 2$.
\end{enumerate}
\end{teo}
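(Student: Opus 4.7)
The plan is to exploit the direct sum decomposition $\mg = \RR\oplus \mh$ to split the filtered Chevalley--Eilenberg complex of $\mg$ into two copies of the one for $\mh$, and then propagate this splitting to every page of the spectral sequence. To set things up, I would write $\mg^* = \RR e^* \oplus \mh^*$ where $e$ spans the one-dimensional ideal and note that $e$ is central, so $de^*=0$. Since $[\mg,\mg]=[\mh,\mh]$, the central descending series satisfies $\mg^i=\mh^i$ for $i\geq 1$; in particular $\mg$ and $\mh$ share the same nilpotency index $k$ (the case $\mh=0$ being trivial), and the annihilator description of the filtration yields
\[
V_i(\mg) \;=\; \RR e^* \oplus \widetilde V_i,\qquad i\geq 1,
\]
where $\widetilde V_i\subset \mh^*$ denotes the $i$-th filtration space of $\mh$.

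Taking exterior powers, this gives
\[
\Lambda^n V_{k-p}(\mg) \;=\; \Lambda^n \widetilde V_{k-p}\;\oplus\; e^*\wedge \Lambda^{n-1}\widetilde V_{k-p},
\]
and a direct expansion using $de^*=0$ shows that the Chevalley--Eilenberg differential preserves each summand: $d(\alpha + e^*\wedge\beta)=d_\mh\alpha - e^*\wedge d_\mh\beta$ for $\alpha,\beta\in\Lambda^*\mh^*$. Hence the filtered complex $(F^\bullet C^*(\mg),d)$ splits as the direct sum of $(F^\bullet C^*(\mh),d_\mh)$ and a copy of the same filtered complex shifted up by one cohomological degree (with reversed sign on the differential, harmless at the spectral-sequence level). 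Since taking associated graded commutes with direct sums of filtered complexes and the induced differentials respect the splitting, an induction on the page index will give the master isomorphism
\[
E_r^{p,q}(\mg)\;\simeq\;\tilde E_r^{p,q}(\mh)\;\oplus\;\tilde E_r^{p,q-1}(\mh)\qquad\text{for every } r\geq 0.
\]

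Statements (1)--(5) will then follow by reading off this master isomorphism in each range of total degree. Assertion (1) is immediate because both summands vanish when $p+q<0$. For (2) the second summand has total degree $-1$ and so vanishes, while the first summand can be computed directly from the $E_0$-page ($\RR$ when $p=k-1$, zero otherwise) and survives to every $E_r$ since the outgoing differential $d_r\colon E_r^{k-1,1-k}\to E_r^{k-1+r,2-k-r}$ has first index at least $k$ and hence zero target. Applying this same computation to $\mh$ gives $\tilde E_r^{p,-p}(\mh)=\RR$ exactly when $p=k-1$, and feeding this into the master isomorphism at total degree $1$ produces (3) and (4) according to whether $p=k-1$ or $0\leq p\leq k-2$. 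Finally, (5) is just the master isomorphism restricted to the range $p+q\geq 2$, where no edge cancellations occur.

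The main technical point to justify carefully is the passage from the filtered-complex splitting to a splitting at every page of the spectral sequence. This reduces to checking that the decomposition is an honest direct sum of filtered \emph{subcomplexes} (not merely of filtered graded pieces), which is forced because $d$ preserves each of the two summands thanks to $de^*=0$. Once this is in place, the induced $d_r$ acts block-diagonally on each page, and the remaining work is careful bookkeeping of indices, which is tedious but mechanical.
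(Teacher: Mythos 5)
Your proposal is correct, and it reaches the conclusion by a genuinely cleaner route than the paper's. Both arguments start from the same key facts --- $e$ is central so $de^*=0$, $d$ restricted to $\mh^*$ is $d_\mh$, and $V_i=\wtV_i\oplus\R e^*$ for $i\geq 1$ --- but they diverge in how the splitting is propagated to the pages. The paper never isolates your observation that the filtered complex itself decomposes as a direct sum of two filtered subcomplexes, $C^*(\mh)$ and $e^*\wedge C^{*-1}(\mh)$ (the latter carrying the differential $-d_\mh$ and the \emph{unshifted} filtration index $p$); instead it verifies the splitting page by page using the explicit description $E_r^{p,q}\simeq A_r^{p,q}/(d(A_{r-1}^{p-r+1,q+r-2})+A_{r-1}^{p+1,q-1})$, writing every element as $\omega=\omega_1+x^*\wedge\omega_2$ and checking separately, in several subcases ($q=3-k$ versus $q\geq 4-k$, $p=k-1$ versus $p\neq k-1$), that the numerator and denominator each split. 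Your functoriality argument buys uniformity: the master isomorphism $E_r^{p,q}\simeq\tilde E_r^{p,q}\oplus\tilde E_r^{p,q-1}$ holds for all $(p,q)$ at once, and items (2)--(4) fall out of it once one knows $\tilde E_r^{p,-p}$ (which, as you note, requires that $\mh$ has the same nilpotency index $k$ as $\mg$ --- this is where the hypothesis that $\R$ is a direct factor, so $\mg^i=\mh^i$ for $i\geq 1$, enters). The paper's computation buys nothing extra here, though it is more self-contained for a reader who does not want to invoke the additivity of the spectral-sequence construction on direct sums of filtered complexes. Two small points to tidy up: in deducing that $E_r^{k-1,1-k}$ stabilizes you should also remark that the incoming differentials vanish (their sources have total degree $-1$), not only the outgoing ones; and it is worth a sentence confirming that the degree-$0$ and degree-$1$ pieces of the shifted summand ($0$ and $\R e^*$ respectively) carry the filtration matching that of $C^0(\mh)=\R$, so the identification of the second summand with the filtered complex of $\mh$ shifted by one is valid at the bottom edge as well. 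Neither is a genuine gap.
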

 
 \begin{skproof} Notice that under the hypothesis of the theorem $\mh$ is $k$-step nilpotent. We will only show some equalities that lead to the complete proof.

The first two assertions follow from the definition of the spectral sequence and Eq. (\ref{l6}). Suppose $\mg=\R x\oplus \mh$ and denote by $x^*$ the element in $\mg^*$ such that $x^*(x)=1$, $x^*(\mh)=0$ and identify  $\mh^*$ with a subset of $\mg^*$. Clearly $dx^*=0$ and $\mh^*$ is invariant by $d$. Also the restriction of $d$ to $\mh^*$ is the differential of the Lie algebra $\mh$.

The subsets  $V_0\subseteq V_1\subseteq\cdots \subseteq V_k=\mg^*$ and $\widetilde{V}_0\subseteq \widetilde{V}_1\subseteq \cdots\subseteq \widetilde{V}_k=\mh^*$ which filter $\mg^*$ and  $ \mh^*$ respectively satisfy:
\begin{equation}\label{rel}\widetilde{V}_0=V_0=0,\qquad V_i=\widetilde{V}_i\oplus \R x^*,\;i=1,\ldots,k .\end{equation}

Eq. (\ref{l6}) gives the initial term of $\{E_r^{p,q}\}$: for total degree one, $E_0^{p,1-p}=V_{k-p}/V_{k-p-1}$ which by (\ref{rel}) is
\[\begin{array}{lcl}
E_0^{k-1,2-k}&=&\wtV_1\oplus \R x^* \text{ and }\vspace{0.2cm}\\
E_0^{p,1-p}&\simeq& \wtV_{k-p}/\wtV_{k-p-1}=\tilde{E}_0^{p,1-p} \qquad\text{ if  }p\neq k-1. \end{array}\]

If $p+q\geq 2$ the equality $\Lambda^{p+q}\,V_{k-p}=\Lambda^{p+q}\, \wtV_{k-p}\oplus (\R x^*\wedge \Lambda^{p+q-1}\,\wtV_{k-p})$ used in Eq. (\ref{l6}) yields to
\begin{eqnarray}
E_0^{p,q}&=&
\frac{\Lambda^{p,q} \wtV_{k-p}\oplus ( \R x^*\wedge \Lambda^{p+q-1}\wtV_{k-p})}{\Lambda^{p,q} \wtV_{k-p-1}\oplus (\R x^*\wedge \Lambda^{p+q-1}\wtV_{k-p-1})}\simeq \frac{\Lambda^{p,q} \wtV_{k-p}}{\Lambda^{p,q} \wtV_{k-p-1}}\oplus \frac{ \Lambda^{p+q-1}\wtV_{k-p}}{\Lambda^{p+q-1}\wtV_{k-p-1}}\nonumber \\
\nonumber\\
&\simeq & \tilde{E}_0^{p,q}\oplus \tilde{E}_0^{p,q-1}.\nonumber
\end{eqnarray}

When $r\geq 1$ the key is to use Eq. (\ref{eq5}). Suppose $p=k-1$, then
\begin{equation}\label{eq32}E_r^{k-1,q}=\frac{\{y\in \Lambda^{k+q-1}V_1:dy=0\}}{d(\{y \in \Lambda^{k+q-2}V_r:dy\in \Lambda^{k+q-1}V_1\})}.\end{equation}
If $q=2-k$ then $E_r^{k-1,2-k}=\wtV_1\oplus \R x^*\simeq \tilde{E}_r^{k-1,2-k}\oplus\R$; if $q\geq 3-k$ (or equi\-valently $k+q-1\geq 2$) 
\[ \Lambda^{k+q-1}V_1=\Lambda^{k+q-1}\wtV_1\oplus (\R x^*\wedge \Lambda^{k+q-2}\wtV_1),\]
therefore $\omega \in  \Lambda^{k+q-1}V_1$ if and only if $\omega=\omega_1+x^*\wedge \omega_2$ with $\omega_1\in\Lambda^{k+q-1}\wtV_1$ and $\omega_2\in \Lambda^{k+q-2}\wtV_1$.
Moreover $\omega$ is closed if and only if $d\omega_1+x^*\wedge d\omega_2=0$, condition only satisfied in the case that $ \omega_1$ and $\omega_2$ are simultaneously closed. So it is possible to rewrite the numerator in (\ref{eq32}) as
\[\{x\in \Lambda^{k+q-1}\wtV_1:dy=0\}\oplus (\R x^*\wedge \{x\in \Lambda^{k+q-2}\wtV_1:dy=0\}).\]

To describe the denominator of the same equation it is necessary to consider two cases:  $k+q-2=1$ (or equivalently $q=3-k$) and $k+q-2\geq 2$. In the first situation 
\[d(\{y \in \Lambda^{k+q-2}V_r:dy\in \Lambda^{k+q-1}V_1\})= d(\{y \in V_r:dy\in \Lambda^{2}V_1\})=d(V_2)=d(\wtV_2),\] reaching
\begin{eqnarray}E_r^{k-1,3-k}&=&\frac{\{x\in \Lambda^{2}\wtV_1:dy=0\}\oplus (\R x^*\wedge \{x\in \wtV_1:dy=0\})}{d(\wtV_2)}\nonumber  \\
\nonumber\\
&\simeq&  \tilde{E}_r^{k-1,3-k}\oplus (\underbrace{\R x^*\wedge \wtV_1}_{\simeq \wtV_1})\simeq \tilde{E}_r^{k-1,3-k}\oplus \tilde{E}_r^{k-1,2-k}.\nonumber \end{eqnarray}

In the case $k+q-2\geq 2$ (or equivalently $q\geq 4-k$) one has
 \begin{eqnarray}
 \Lambda^{k+q-2} V_r&=&\Lambda^{k+q-2}\wtV_r\oplus \,(\R x^*\,\wedge\, \Lambda^{k+q-3}\wtV_r),\nonumber\\
 \Lambda^{k+q-1}V_1&=&\Lambda^{k+q-1}\wtV_1\oplus (\R x^* \wedge \Lambda^{k+q-2}\wtV_1).\nonumber\end{eqnarray}
 Thus any element $\omega$ of $\Lambda^{k+q-2} V_r$ can be written as $\omega=\omega_1+x^*\wedge \omega_2$ where\linebreak
$\omega_1\in\Lambda^{k+q-2}\wtV_r$ and $\omega_2\in \Lambda^{k+q-3}\wtV_r$. In addition $d\omega=d\omega_1+x^* \wedge d\omega_2$ is an ele\-ment into $\Lambda^{k+q-1}V_1$  if and only if $
d\omega_1\in \Lambda^{k+q-1}\wtV_1 $ and $d\omega_2\in \Lambda^{k+q-2}\wtV_1.$ All this imply
\[ d(\{y \in \Lambda^{k+q-2}V_r:dy\in \Lambda^{k+q-1}V_1\})=U\oplus W\]
where \vspace{-0.4cm}
\begin{eqnarray}
U&=&d(\{y \in \Lambda^{k+q-2}\wtV_r:dy\in \Lambda^{k+q-1}\wtV_1\}),\nonumber \\
W&=&\R x^*\wedge d(\{y \in \Lambda^{k+q-3}\wtV_r:dy\in \Lambda^{k+q-2}\wtV_1\}).\nonumber\end{eqnarray}
 
Combining the formulas obtained for the numerator and denominator in Eq. (\ref{eq32}) it follows
\begin{equation}
E_r^{k-1,q}\simeq \tilde{E}_{r}^{k-1,q}\oplus \tilde{E}_r^{k-1,q-1}.\nonumber\end{equation}

For those $p\neq k-1$ the proof is analogous.
\end{skproof}
\medskip

Applying an inductive procedure, one proves a similar formula for Lie algebras having an abelian direct factor of dimension $s$, $s\geq 1$.

\begin{corollary} Let $\mg$ be a nilpotent Lie algebra that decomposes as a direct sum of ideals $\R^s\oplus \mh$ for some $s\geq 1$. Let $\{E_r^{p,q}\}$ and $\{\tilde{E}_r^{p,q}\}$ be the spectral sequences associated to $\mg$ and $\mh$ respectively. Then each term of the spectral sequence $\{E_r^{p,q}\}$ can be written as a sum of certain terms of the spectral sequence $\{\tilde{E}_r^{p,q}\}$. In particular $\{E_r^{p,q}\}$ degenerates at $r_0$ if and only if $\{\tilde{E}_r^{p,q}\}$ does. \end{corollary}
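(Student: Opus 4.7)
The plan is to proceed by induction on $s \geq 1$, using Theorem \ref{teo:teo24} both as the base case and as the inductive engine. Let $\mh_s = \R^s \oplus \mh$ for $s \geq 0$, so that $\mh_0 = \mh$ and $\mh_s = \R \oplus \mh_{s-1}$. The first observation to record is that all $\mh_s$ share the same nilpotency index as $\mh$: since $\R^s$ lies in the center, the central descending series satisfies $\mh_s^i = \mh^i$ for every $i \geq 1$, so Theorem \ref{teo:teo24} applies in its stated form at every step.

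The base case $s=1$ is exactly Theorem \ref{teo:teo24}. For the inductive step, assume each term $E_r^{p,q}(\mh_{s-1})$ can be expressed as a direct sum of terms of $\{\tilde{E}_r^{p,q}\}$. Applying Theorem \ref{teo:teo24} to the decomposition $\mh_s = \R \oplus \mh_{s-1}$ expresses each $E_r^{p,q}(\mh_s)$ as a direct sum of (at most two) terms of the spectral sequence of $\mh_{s-1}$. Substituting the inductive description of those terms yields the desired expression of $E_r^{p,q}(\mg) = E_r^{p,q}(\mh_s)$ as a direct sum of terms of $\{\tilde{E}_r^{p,q}\}$. The bookkeeping here is routine; one can even package the result by noting that in the formula of item (5) of Theorem \ref{teo:teo24}, iterating $s$ times produces a binomial-coefficient decomposition
\[
E_r^{p,q}(\mg) \simeq \bigoplus_{j=0}^{s}\binom{s}{j}\,\tilde{E}_r^{p,q-j}
\]
whenever $p+q$ is large enough, with suitable modifications in the low-degree ranges described in items (1)--(4). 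I would record this formula but not insist on it, since only the direct-sum structure is needed for the degeneration claim.

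For the statement about degeneration, recall that $\{E_r^{p,q}\}$ degenerates at $r_0$ means $E_r^{p,q} = E_{r_0}^{p,q}$ for every $r \geq r_0$ and every $p,q$, or equivalently (over $\R$) that $\dim E_r^{p,q}$ is constant in $r \geq r_0$ for every $p,q$. The forward implication is immediate from the direct-sum decomposition: if every $\tilde{E}_r^{p,q}$ stabilizes at $r_0$, then so does every direct sum of such terms. The converse uses that the function $r \mapsto \dim \tilde{E}_r^{p,q}$ is non-increasing, since each $\tilde{E}_{r+1}^{p,q}$ is a subquotient of $\tilde{E}_r^{p,q}$. If a sum of such non-increasing quantities is constant for $r \geq r_0$, each summand must itself be constant, so $\{\tilde{E}_r^{p,q}\}$ degenerates at $r_0$.

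The main potential obstacle is a clean notational handling of the induction; the low-degree boundary cases in items (1)--(4) of Theorem \ref{teo:teo24} must be threaded through the induction without contradiction. This is mitigated by the fact that these boundary cases only involve either $\R$ or $\{\tilde{E}_r^{p,q}\}$-summands which are themselves constant in $r$, so they pose no issue for either the decomposition formula or the degeneration equivalence.
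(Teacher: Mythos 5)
Your proposal is correct and follows exactly the route the paper intends: the paper offers no written proof of this corollary beyond the phrase ``applying an inductive procedure,'' and your induction on $s$ via repeated application of Theorem \ref{teo:teo24} is precisely that procedure. You in fact supply more detail than the paper does, notably the monotonicity argument ($\dim E_{r+1}^{p,q}\leq\dim E_r^{p,q}$ since each page is a subquotient of the previous one) needed for the ``only if'' direction of the degeneration claim, which is sound and welcome.
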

\smallskip

\begin{example}\label{example} Let $\mgg$ be the four dimensional Lie algebra with basis $\{e_1,e_2,e_3,e_4\}$ and non-zero bracket $[e_2,e_3]=e_4$. This Lie algebra is 2-step nilpotent and it is isomorphic to $\R\oplus \mh_3$; we apply Theorem \ref{teo:teo24} to compute the limit of the spectral sequence of $\mg$, $\{E_r^{p,q}\}$, by means of the spectral sequence associated to $\mh_3$, $\{\tilde{E}_r^{p,q}\}$. 


Recall that, $E_\infty^{p,q}=0$ if $p<0$ or $p\geq 2$. Using the formulas in the previous theorem for the limit of the spectral sequence of $\mg$, we have
\begin{enumerate}
\item $E_\infty^{p,q}=0$ if $p+q<0$,
\item $E_\infty^{0,0}=0$, $\quad E_\infty^{1,-1}=\R$,
\item $E_\infty^{1,0}\simeq \tilde{E}_\infty^{1,0}\oplus \R=span \{[e^2],[e^3]\}\oplus \R$,
\item $E_\infty^{0,1}\simeq \tilde{E}_\infty^{0,1}=0$,
\item 
  \begin{enumerate}
  \item $E_\infty^{0,2}\simeq \tilde{E}_\infty^{0,2}\oplus \tilde{E}_\infty^{0,1}\simeq span\{[e^2\wedge e^4],[e^3\wedge e^4]\}$,
  \item $E_\infty^{1,1}\simeq \tilde{E}_\infty^{1,1}\oplus \tilde{E}_\infty^{1,0}\simeq span\{[e^2],[e^3]\}$,
  \item $E_\infty^{0,3}\simeq \tilde{E}_\infty^{0,3}\oplus \tilde{E}_\infty^{0,2}$
  
   $\phantom{E_\infty^{0,3}}\simeq span\{[e^2\wedge e^3 \wedge e^4]\}\oplus span\{[e^2\wedge e^4],[e^3\wedge e^4]\}$,
  \item $E_\infty^{1,2}\simeq \tilde{E}_\infty^{1,2}\oplus \tilde{E}_\infty^{1,1}=0$ since $\tilde{E}_\infty^{1,1}=\tilde{E}_\infty^{1,2}=0$,
  \item $E_\infty^{0,4}\simeq \tilde{E}_\infty^{0,4}\oplus \tilde{E}_\infty^{0,3}\simeq span\{[e^2\wedge e^3\wedge e^4]\}$.
  \end{enumerate}
\end{enumerate}

It is not hard to verify the isomorphisms established for the non-zero limit terms of the spectral sequence $\{\tilde{E}_r^{p,q}\}$.
\end{example}

\medskip
We conclude this section with some comments on the behavior of the number $r_0$ for which the spectral sequence degenerates, i.e. $r_0$ is the minimum $r$ for which $d_r^{p,q}=0$ and consequently $E_r^{p,q}=E_\infty^{p,q}$ for each $r\geq r_0$ and $p,q\,\in\Z$. This number is related to the nilpotency index and the dimension of the nilpotent Lie algebra $\mg$.

When the Lie algebra $\mg$ is $k$-step nilpotent it is easy to see that for any $r\geq k$, the differential $d_r^{p,q}:E_r^{p,q}\lra E_r^{p+r,q-r+1}$ has the zero space as either its domain or target space (or both). Hence, the minimum $r_0$ is at most $k$. Namely,
\begin{pro}\label{pro:pro1} Let $\mg$ be a $k$-step nilpotent Lie algebra and $\{E_r^{p,q}\}$ the spectral sequence associated to $\mg$. For any $r\geq k$ it holds $E_r^{p,q}=E_\infty^{p,q}$ for all $p,q\,\in\Z$.
\end{pro}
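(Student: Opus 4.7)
The plan is to show that for $r\geq k$ every differential $d_r^{p,q}\colon E_r^{p,q}\to E_r^{p+r,q-r+1}$ of the spectral sequence vanishes, both as outgoing arrow from $E_r^{p,q}$ and as incoming arrow into it. Since $E_{r+1}^{p,q}$ is obtained as $\ker d_r^{p,q}/\operatorname{Im} d_r^{p-r,q+r-1}$, this vanishing forces $E_{r+1}^{p,q}\simeq E_r^{p,q}$ at every bidegree, and iterating gives $E_r^{p,q}=E_\infty^{p,q}$ for all $r\geq k$.

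The key ingredient is the support property of the spectral sequence, already noted immediately after (\ref{l6}): one has $E_0^{p,q}\neq 0$ only when $0\leq p\leq k-1$. This is a direct consequence of the boundedness of the filtration (\ref{l4}), since $F^0C^*=C^*$ and $F^kC^*=0$, so $E_0^{p,q}=F^pC^{p+q}/F^{p+1}C^{p+q}$ vanishes outside the stated range. Because each $E_{r+1}^{p,q}$ is a subquotient of $E_r^{p,q}$, this support condition is inherited by every subsequent page; hence $E_r^{p,q}=0$ whenever $p<0$ or $p\geq k$, for every $r\geq 0$.

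Now fix $r\geq k$ and $(p,q)\in\Z^2$. For the outgoing differential $d_r^{p,q}$, if $E_r^{p,q}\neq 0$ then $p\leq k-1$, so $p+r\geq k$, which forces $E_r^{p+r,q-r+1}=0$; consequently $d_r^{p,q}=0$. For the incoming differential $d_r^{p-r,q+r-1}\colon E_r^{p-r,q+r-1}\to E_r^{p,q}$, nontriviality of the source requires $p-r\geq 0$, that is $p\geq r\geq k$, which contradicts the bound $p\leq k-1$ needed for the target to be nonzero. Hence the incoming differential also vanishes, and we conclude $E_{r+1}^{p,q}\simeq E_r^{p,q}$. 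Iterating this equality for every $r\geq k$ and invoking the convergence of the spectral sequence, which is guaranteed by the boundedness of the filtration, yields $E_r^{p,q}=E_\infty^{p,q}$ for all $r\geq k$. No real obstacle is expected: the argument is a clean index check relying exclusively on the bounded support of the pages.
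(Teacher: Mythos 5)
Your proof is correct and follows essentially the same route as the paper: the paper likewise observes that for $r\geq k$ the differential $d_r^{p,q}\colon E_r^{p,q}\to E_r^{p+r,q-r+1}$ always has zero domain or zero target because the pages are supported in $0\leq p\leq k-1$. Your write-up merely makes explicit the inheritance of this support condition from $E_0$ and the resulting identification $E_{r+1}^{p,q}\simeq E_r^{p,q}$, which the paper leaves as an easy verification.
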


\smallskip

The family of nilpotent Lie algebras in Example \ref{pro:pro2} shows that it is not possible to find a common $r_0$ valid for all nilpotent Lie algebras: for any $m \in\N$ the spectral sequence $\{E_r^{p,q}\}$ associated to the nilpotent Lie algebra $\mm_0(2m)$ does not degenerate in the $m-1$ step.
Indeed, denote with $V_i$, $i=1,\ldots,2m-1$ the filtration of $\mm_0(2m)^*$. Recall that $V_i= \{e^1,\ldots,e^{i+1}\}$, then the 2-form
\[\sigma= 
e^2\wedge e^{2m}-e^3\wedge e^{2m-1}+\cdots +(-1)^{m-2}e^{m}\wedge e^{m+2}\]
is not an element of $\Lambda^2V_{2m-2}$ and by Eq. (\ref{eq25}) its differential is
\[ d\sigma= (-1)^{m-2} \; e^1\wedge e^{m}\wedge e^{m+1}.\]
Thus $d\sigma\in\Lambda^3 V_{m}$ since $V_m=span\{e^1,\ldots,e^{m+1}\}$.
Hence $\sigma$ and $e^1\wedge e^{2m}$ define linearly independent elements in 
\[
E_{m-1}^{0,2}(\mm_0(2m))\simeq \frac{\{x \in \Lambda^{2}\mg^*:dx \in \Lambda^{3}V_{m}\}}{\{x\in\Lambda^{2}V_{2m-2}:dx\in\Lambda^{3}V_{m}\}}
\] (see Eq. (\ref{eq5})) so $\dim E_{m-1}^{0,2}(\mm_0(2m))\geq 2$. Instead, by Eq. (\ref{eq20})\[
\dim \,E_\infty^{0,2}(\mm_0(2m))=1,\] therefore $E_{m-1}^{p,q}$ is not the limit of the spectral sequence.

\begin{remark} It would be interesting to find numbers $r_0(m)$ for which the spectral sequence of any nilpotent Lie algebra $\mg$ of dimension $m$ degenerates at $r\leq r_0(m)$. From the last example, this number $r_0(m)$ must be at least $m/2$. We have examples to believe that $r_0(m)$ would be $\left\lceil m/2\right\rceil$, the least integer not smaller than $m/2$.
\end{remark}

\section{The spectral sequence in low dimensions}

Nilpotent Lie algebras over the real numbers are classified up to dimension seven. Though six is the highest dimension in which there do not exist continuous families \cite{GO-KH,MA}. In this section we present the limit of the spectral sequences associated to nilpotent Lie algebra up to dimension six. The computations were made by hand using, when possible, the properties proved in the previous section. Later, we checked the calculations with a computational program we developed. The information obtained is presented in tables as we explain next. 

\smallskip
Given a $k$-step nilpotent Lie algebra $\mgg$ of dimension $m$ and with spectral sequence $\{E_r^{p,q}\}$ we build up tables to display the dimensions of $E_0^{p,q},\,E_1^{p,q},\,\ldots,E_r^{p,q}=E_\infty^{p,q}$, precisely, $r+1$ tables having $k$ rows and $m+1$ columns. In the first column we put the terms of total degree 0, in the second column the ones of total degree 1, and so on. Set $e_r^{p,q}=\dim E_r^{p,q}$, thus the table corresponding to $r$-th term of the spectral sequence of $\mgg$ is as the one below.
\begin{table}[h]
\centering
		\begin{tabular}{|c|c|c|c|c|}
		\hline 
		$e_r^{k-1,1-k}$&$e_r^{k-1,2-k}$ & $e_r^{k-1,3-k}$ &  $\cdots$ &$ e_r^{k-1,m+1-k}$ \\		
		\hline
		\vdots&\vdots & \vdots &  $\cdots$  &\vdots \\
		\hline 		
$e_r^{1,-1}$&$e_r^{1,0}$ & $e_r^{1,1}$ & $\cdots$  &$ e_r^{1,m-1} $ \\		
		\hline 
		$e_r^{0,0}$ & $e_r^{0,1}$ & $e_r^{0,2}$ & $\cdots$ &$ e_r^{0,m} $ \\
		\hline 
		\end{tabular}
		
		\vspace{0.5cm}
		\label{tab1}
	
\end{table}
\pagebreak

Some of the properties  of the spectral sequence established along the previous section can be read off the diagram corresponding to $E_\infty^{p,q}$. Indeed, because of Theorem \ref{pro:pro3} the first two columns and the last two columns consist of zeros except for the top or bottom numbers. In fact, the first column (resp. last column) has a number 1 at the top (resp. bottom) of the table and the second column (resp. penultimate column) has the $\dim \ker (d:\mg^*\lra \Lambda^2\mg^*)$ at the top (resp. bottom) of the table.

Furthermore a consequence of Eq. (\ref{eq8}) is that the sum of the elements of the $i$-th column gives as result the $i$-th Betti number of the Lie algebra $\mgg$, i.e. \[\beta_i=\dim H^i(\mgg) = \sum_{p+q=i} e_\infty^{p,q}. \]
For nilpotent Lie algebras the Poincar\'e duality holds (see \cite{KOZ}) hence the sum of the elements of the $i$-th column coincides with the sum of the elements in the $(m-i)$-th column. 

Theorem \ref{teo:teo24} gives instructions to construct the table of a Lie algebra of the form $\mg=\R\oplus \mh$ once we know the table of $\mh$: the $i$-th column of the table corresponding to $\mg$ is obtained by adding up the $i$-th with the $(i-1)$-th columns of the table corresponding to $\mh$ with the only exception of the first column.

\smallskip

The notation we use to describe the Lie algebras is that one employed by S. Salamon in \cite{SA1}. That is, a Lie algebra denoted as $\mg=(0,0,12,13,23,14+25)$ is that one admitting a basis of 1-forms $\{e^1,e^2,e^3,e^4,e^5,e^6\}$ where the Lie algebra differential satisfies $ de^1=de^2=0$ and
\[ de^3=e^1\wedge e^2,  \qquad de^4=e^1 \wedge e^3,   \qquad de^5=e^2\wedge e^3, \qquad de^6=e^1\wedge e^4+e^2 \wedge e^5.\] 
The comparison between this notation and that one used by Magnin \cite{MA} and Morozov \cite{MOR} in their classifications is available in \cite{SA1} for dimension six.

If a listed nilpotent Lie algebra $\mg$ admits a decomposition as a direct sum of ideals of the form $\mgg=\R^s \oplus \mh$ we will make this fact clear for the reader to compare the tables of $\mg$ and $\mh$ according to Theorem \ref{teo:teo24}. 
\medskip

{\bf \Large Dimension three}
\smallskip

$\mh_3=(0,0,12)$

{\tiny
$$\begin{array}{cccc}
E_0 & E_1 & E_2=E_\infty \\
\begin{tabular}{|c|c|c|c|}
	\hline
1&	2	&1&	0\\
  \hline
  0&1&2&1  \\
\hline
\end{tabular}
&
\begin{tabular}{|c|c|c|c|}
	\hline
	1&2	&1&	0  \\
  \hline
  0&1&2&1\\
\hline
\end{tabular}
&
\begin{tabular}{|c|c|c|c|}
	\hline
	1&2	&0&	0  \\
  \hline
  0&0&2&1\\
\hline
\end{tabular}
\end{array}$$}

{\bf \Large Dimension four}
\smallskip
\begin{enumerate}

\item\label{dim4-2} $\mgg=(0,0,0,12)$, $\mg\simeq \R\oplus \mh_3$.
{\tiny
$$\begin{array}{cccc}
E_0 & E_1 & E_2=E_\infty \\
\begin{tabular}{|c|c|c|c|c|}
	\hline
1&	3	&3&	1&0\\
	\hline
  0&1&3&3&1\\
\hline
\end{tabular}
&
\begin{tabular}{|c|c|c|c|c|}
	\hline
	1&3	&3&	1&0\\
	\hline
  0&1&3&3&1\\
\hline
\end{tabular}
&
\begin{tabular}{|c|c|c|c|c|}
	\hline
	1&3	&2&	0&0\\
	\hline
  0&0&2&3&1\\
\hline
\end{tabular}
\end{array}$$}

\item\label{dim4-1} $\mgg=(0,0,12,13)$
{\tiny
$$\begin{array}{cccc}
E_0 & E_1 & E_2=E_\infty \\
\begin{tabular}{|c|c|c|c|c|}
	\hline
1&	2	&1&	0&0\\
	\hline
  0&1&2&1&0\\
  \hline
  0&1&3&3&1\\
\hline
\end{tabular}
&
\begin{tabular}{|c|c|c|c|c|}
	\hline
	1&2	&1&	0&0\\
	\hline
  0&1&2&1&0\\
  \hline
  0&1&2&2&1\\
\hline
\end{tabular}
&
\begin{tabular}{|c|c|c|c|c|}
	\hline
	1&2	&0&	0&0\\
	\hline
  0&0&1&0&0\\
  \hline
  0&0&1&2&1\\
  \hline
\end{tabular}
\end{array}$$}

\end{enumerate}

{\bf \Large Dimension five}
\smallskip
 
$(1)\,\bullet \,\mg=(0,0,12,13,14+23)$:\hspace{-0.5cm}
	\begin{flushleft}	
\tiny $$\begin{array}{ccc}
E_0 & E_1 & E_2 \\
\\
\begin{tabular}{|c|c|c|c|c|c|}
	\hline
	1&2	&1&	0&	0&	0\\
	\hline
0&1&	2&	1&	0&	0\\
  \hline
  0&1&	3&	3&	1&	0\\
  \hline
  0&1&	4&	6&	4&	1\\
  \hline
	\end{tabular}
&

\begin{tabular}{|c|c|c|c|c|c|}
\hline
	1&2	&1&	0&	0&	0\\
	\hline
0&1&	2&	1&	0&	0\\
  \hline
  0&1&	2&	2&	1&	0\\
  \hline
  0&1&	2&	2&	2&	1\\
  \hline	\end{tabular}
&
\begin{tabular}{|c|c|c|c|c|c|}
\hline
	1&2	&0&	0&	0&	0\\
	\hline
0&0&	1&	0&	0&	0\\
  \hline
  0&0&	0&	2&	0&	0\\
  \hline
  0&0&	2&	1&	2&	1\\
  \hline	\end{tabular}
\end{array} $$
\end{flushleft}

$(2)\,\bullet \,\mg=(0,0,12,13,14)$:
	\begin{flushleft}	
\tiny $$\begin{array}{ccc}
E_0 & E_1 & E_2 \\
\\
\begin{tabular}{|c|c|c|c|c|c|}
	\hline
	1&2	&1&	0&	0&	0\\
	\hline
0&1&	2&	1&	0&	0\\
  \hline
  0&1&	3&	3&	1&	0\\
  \hline
  0&1&	4&	6&	4&	1\\
  \hline
	\end{tabular}
&

\begin{tabular}{|c|c|c|c|c|c|}
\hline
	1&2	&1&	0&	0&	0\\
	\hline
0&1&	2&	1&	0&	0\\
  \hline
  0&1&	2&	2&	1&	0\\
  \hline
  0&1&	2&	2&	2&	1\\
  \hline	\end{tabular}
&
\begin{tabular}{|c|c|c|c|c|c|}
\hline
	1&2	&0&	0&	0&	0\\
	\hline
0&0&	1&	0&	0&	0\\
  \hline
  0&0&	0&	2&	0&	0\\
  \hline
  0&0&	2&	1&	2&	1\\
  \hline	\end{tabular}
\end{array} $$
\end{flushleft}

$(3)\,\bullet \,\mg=(0,0,12,13,23)$:
	\begin{flushleft}	
\tiny $$\begin{array}{ccc}
E_0 & E_1 & E_2 \\
\\
\begin{tabular}{|c|c|c|c|c|c|}
	\hline
	1&2	&1&	0&	0&	0\\
	\hline
0&1&	2&	1&	0&	0\\
  \hline
  0&2&	7&	9&	5&	1\\
  \hline
	\end{tabular}
&

\begin{tabular}{|c|c|c|c|c|c|}
		\hline
	1&2	&1&	0&	0&	0\\
	\hline
0&1&	2&	1&	0&	0\\
  \hline
  0&1&	4&	3&	2&	1\\
  \hline
	\end{tabular}
&
\begin{tabular}{|c|c|c|c|c|c|}
		\hline
	1&2	&0&	0&	0&	0\\
	\hline
0&0&	0&	0&	0&	0\\
  \hline
  0&0&	3&	3&	2&	1\\
  \hline
	\end{tabular}
\end{array} $$
\end{flushleft}

$(4)\,\bullet \,\mg=(0,0,0,12,14+23)$:
	\begin{flushleft}	
\tiny $$\begin{array}{ccc}
E_0 & E_1 & E_2 \\
\\
\begin{tabular}{|c|c|c|c|c|c|}
	\hline
	1&3	&3&	1&	0&	0\\
	\hline
0&1&	3&	3&	1&	0\\
  \hline
  0&1&	4&	6&	4&	1\\
  \hline
	\end{tabular}
&

\begin{tabular}{|c|c|c|c|c|c|}
	\hline
	1&3	&3&	1&	0&	0\\
	\hline
0&1&	3&	3&	1&	0\\
  \hline
  0&1&	3&	4&3&	1\\
  \hline
	\end{tabular}
&
\begin{tabular}{|c|c|c|c|c|c|}
	\hline
	1&3	&2&	0&	0&	0\\
	\hline
0&0&	1&	1&	0&	0\\
  \hline
  0&0&	1&	3&	3&	1\\
  \hline
	\end{tabular}
\end{array} $$
\end{flushleft}

$(5)\,\bullet \,\mg=(0,0,0,12,24)$:  $\mg=\R e_3\oplus \mh$ where $\mh$ is the 3-step nilpotent Lie algebra of dimension 4.
	\begin{flushleft}	
\tiny $$\begin{array}{ccc}
E_0 & E_1 & E_2 \\
\\
\begin{tabular}{|c|c|c|c|c|c|}
	\hline
	1&3	&3&	1&	0&	0\\
	\hline
0&1&	3&	3&	1&	0\\
  \hline
  0&1&	4&	6&	4&	1\\
  \hline
	\end{tabular}
&

\begin{tabular}{|c|c|c|c|c|c|}
	\hline
	1&3	&3&	1&	0&	0\\
	\hline
  0&1&	3&	3&	1&	0\\
  \hline
  0&1&	3&	4&	3&	1\\
  \hline
	\end{tabular}
&
\begin{tabular}{|c|c|c|c|c|c|}
	\hline
	1&3&2&	0&	0&	0\\
	\hline
  0&0&1&	1&	0&	0\\
  \hline
  0&0&1&	3&	3&	1\\
  \hline
	\end{tabular}
\end{array} $$
\end{flushleft}

$(6)\,\bullet \,\mg=(0,0,0,12,13)$:  
	\begin{flushleft}	
\tiny $$\begin{array}{ccc}
E_0 & E_1 & E_2 \\
\\
\begin{tabular}{|c|c|c|c|c|c|}
	\hline
1&	3&	3&	1&0&0\\
  \hline
  0&2&	7&	9&	5&	1\\
  \hline
	\end{tabular}
&

\begin{tabular}{|c|c|c|c|c|c|}
	\hline
1&	3&	3&	1&0&0\\
  \hline
  0&2&	6&	6&	3&	1\\
  \hline
	\end{tabular}
&
\begin{tabular}{|c|c|c|c|c|c|}
	\hline
  1&3&	1&	0&0&	0\\
  \hline
  0&0&5&	6&	3&	1\\
  \hline
	\end{tabular}
\end{array} $$
\end{flushleft}

$(7)\,\bullet \,\mg=(0,0,0,0,12)$: $\mg=\R e_3\oplus\R e_4\oplus \mh_3$.
	\begin{flushleft}	
\tiny $$\begin{array}{ccc}
E_0 & E_1 & E_2 \\
\\
\begin{tabular}{|c|c|c|c|c|c|}
	\hline
	1&4	&6&	4&	1&	0\\
	\hline
0&1&	4&	6&	4&	1\\
  \hline
	\end{tabular}
&

\begin{tabular}{|c|c|c|c|c|c|}
	\hline
	1&4	&6&	4&	1&	0\\
	\hline
0&1&	4&	6&	4&	1\\
  \hline
	\end{tabular}
&
\begin{tabular}{|c|c|c|c|c|c|}
	\hline
	1&4	&5&	2&	0&	0\\
	\hline
0&0&	2&	5&	4&	1\\
  \hline
	\end{tabular}
\end{array} $$
\end{flushleft}

$(8)\,\bullet \,\mg=(0,0,0,0,12+34)$:
	\begin{flushleft}	
\tiny $$\begin{array}{ccc}
E_0 & E_1 & E_2 \\
\\
\begin{tabular}{|c|c|c|c|c|c|}
	\hline
	1&4	&6&	4&	1&	0\\
	\hline
0&1&	4&	6&	4&	1\\
  \hline
	\end{tabular}
&

\begin{tabular}{|c|c|c|c|c|c|}
	\hline
	1&4	&6&	4&	1&	0\\
	\hline
0&1&	4&	6&	4&	1\\
  \hline
	\end{tabular}
&
\begin{tabular}{|c|c|c|c|c|c|}
	\hline
	1&4	&5&	0&	0&	0\\
	\hline
0&0&	0&	5&	4&	1\\
  \hline
	\end{tabular}
\end{array} $$
\end{flushleft}

\pagebreak

{\bf \Large Dimension six}
\smallskip

$(1)\,\bullet \,\mg=(0,0,12,13,14+23,34+52)$: 					
\hspace{-2cm} 	\begin{flushleft}	
\tiny $$\begin{array}{cccc}
E_0 & E_1 & E_2 & E_3=E_\infty \\
\\
\begin{tabular}{|c|c|c|c|c|c|c|}
	\hline
	1&2	&1&	0&	0&	0&	0\\
	\hline
0&1&	2&	1&	0&	0&	0\\
\hline
0&1&	3&	3&	1&	0&	0\\
\hline
0&1&	4&	6&	4&	1&	0\\
\hline
0&1&	5&	10&	10&	5&	1\\
  \hline
	\end{tabular}
&

\begin{tabular}{|c|c|c|c|c|c|c|}
	\hline
1&2&1&0&0&0&0\\
\hline
0&1&2&1&0&0&0\\
\hline
0&1&2&2&1&0&0\\
\hline
0&1&2&2&2&1&0\\
\hline
0&1&2&3&3&2&1\\
  \hline
	\end{tabular}
&
\begin{tabular}{|c|c|c|c|c|c|c|}
	\hline
1&2&0&0&0&0&0\\
\hline
0&0&1&0&0&0&0\\
\hline
0&0&0&2&0&0&0\\
\hline
0&0&1&1&0&0&0\\
\hline
0&0&2&1&2&2&1\\
  \hline
	\end{tabular}
&
	\begin{tabular}{|c|c|c|c|c|c|c|}
	\hline
	1&2 & 0 & 0 & 0 & 0 & 0 \\
	\hline
	0&0 & 1 & 0 & 0 & 0 & 0 \\
	\hline
	0&0 & 0 & 0 & 0 & 0 & 0 \\
	\hline
	0&0 & 1 & 1 & 0 & 0 & 0 \\
	\hline
	0&0 & 0 & 1 & 2 & 2 & 1 \\
	\hline	
	\end{tabular}
\end{array} $$
\end{flushleft}
\normalsize
 		
$(2)\,\bullet \,\mg=(0,0,12,13,14,34+52)$:  		  					
 	\begin{flushleft}	
\tiny $$\begin{array}{cccc}
E_0 & E_1 & E_2 & E_3=E_\infty \\
\\
\begin{tabular}{|c|c|c|c|c|c|c|}
	\hline
	1&2	&1&	0&	0&	0&	0\\
	\hline
0&1&	2&	1&	0&	0&	0\\
\hline
0&1&	3&	3&	1&	0&	0\\
\hline
0&1&	4&	6&	4&	1&	0\\
\hline
0&1&	5&	10&	10&	5&	1\\
  \hline
	\end{tabular}
&

\begin{tabular}{|c|c|c|c|c|c|c|}
	\hline
1&2&1&0&0&0&0\\
\hline
0&1&2&1&0&0&0\\
\hline
0&1&2&2&1&0&0\\
\hline
0&1&2&2&2&1&0\\
\hline
0&1&2&3&3&5&1\\
  \hline
	\end{tabular}
&
\begin{tabular}{|c|c|c|c|c|c|c|}
	\hline
1&2&0&0&0&0&0\\
\hline
0&0&1&0&0&0&0\\
\hline
0&0&0&2&0&0&0\\
\hline
0&0&1&1&0&0&0\\
\hline
0&0&2&1&2&2&1\\
  \hline
	\end{tabular}
&
	\begin{tabular}{|c|c|c|c|c|c|c|}
	\hline
	1&2 & 0 & 0 & 0 & 0 & 0 \\
	\hline
	0&0 & 1 & 0 & 0 & 0 & 0 \\
	\hline
	0&0 & 0 & 0 & 0 & 0 & 0 \\
	\hline
	0&0 & 1 & 1 & 0 & 0 & 0 \\
	\hline
	0&0 & 0 & 1 & 2 & 2 & 1 \\
	\hline	
	\end{tabular}
\end{array} $$
\end{flushleft}
\normalsize

 		$(3)\,\bullet \,\mg=(0,0,12,13,14,15)$:  	\begin{flushleft}	
\tiny $$\begin{array}{cccc}
E_0 & E_1 & E_2 & E_3=E_\infty \\
\\
\begin{tabular}{|c|c|c|c|c|c|c|}
	\hline
	1&2	&1&	0&	0&	0&	0\\
	\hline
0&1&	2&	1&	0&	0&	0\\
\hline
0&1&	3&	3&	1&	0&	0\\
\hline
0&1&	4&	6&	4&	1&	0\\
\hline
0&1&	5&	10&	10&	5&	1\\
  \hline
	\end{tabular}
&

\begin{tabular}{|c|c|c|c|c|c|c|}
	\hline
1&2&1&0&0&0&0\\
\hline
0&1&2&1&0&0&0\\
\hline
0&1&2&2&1&0&0\\
\hline
0&1&2&2&2&1&0\\
\hline
0&1&2&3&3&2&1\\
  \hline
	\end{tabular}
&
\begin{tabular}{|c|c|c|c|c|c|c|}
	\hline
1&2&0&0&0&0&0\\
\hline
0&0&1&0&0&0&0\\
\hline
0&0&0&2&0&0&0\\
\hline
0&0&1&1&1&0&0\\
\hline
0&0&2&2&2&2&1\\
  \hline
	\end{tabular}
&
	\begin{tabular}{|c|c|c|c|c|c|c|}
	\hline
	1&2 & 0 & 0 & 0 & 0 & 0 \\
	\hline
	0&0 & 1 & 0 & 0 & 0 & 0 \\
	\hline
	0&0 & 0 & 1 & 0 & 0 & 0 \\
	\hline
	0&0 & 1 & 1 & 1 & 0 & 0 \\
	\hline
	0&0 &1  & 2 & 2 & 2 & 1 \\
	\hline	
	\end{tabular}
\end{array} $$
\end{flushleft}
\normalsize
 		
 	$(4)\,\bullet \,\mg=(0,0,12,13,14+23,15+24)$: 
 					
 	\begin{flushleft}	
\tiny $$\begin{array}{cccc}
E_0 & E_1 & E_2 & E_3=E_\infty \\
\\
\begin{tabular}{|c|c|c|c|c|c|c|}
	\hline
	1&2	&1&	0&	0&	0&	0\\
	\hline
0&1&	2&	1&	0&	0&	0\\
\hline
0&1&	3&	3&	1&	0&	0\\
\hline
0&1&	4&	6&	4&	1&	0\\
\hline
0&1&	5&	10&	10&	5&	1\\
  \hline
	\end{tabular}
&

\begin{tabular}{|c|c|c|c|c|c|c|}
	\hline
1&2&1&0&0&0&0\\
\hline
0&1&2&1&0&0&0\\
\hline
0&1&2&2&1&0&0\\
\hline
0&1&2&2&2&1&0\\
\hline
0&1&2&3&2&2&1\\
  \hline
	\end{tabular}
&
\begin{tabular}{|c|c|c|c|c|c|c|}
	\hline
1&2&0&0&0&0&0\\
\hline
0&0&1&0&0&0&0\\
\hline
0&0&0&2&0&0&0\\
\hline
0&0&1&1&1&0&0\\
\hline
0&0&2&2&2&2&1\\
  \hline
	\end{tabular}
&
	\begin{tabular}{|c|c|c|c|c|c|c|}
	\hline
	1&2 & 0 & 0 & 0 & 0 & 0 \\
	\hline
	0&0 & 1 & 0 & 0 & 0 & 0 \\
	\hline
	0&0 & 0 & 1 & 0 & 0 & 0 \\
	\hline
	0&0 & 1 & 1 & 1 & 0 & 0 \\
	\hline
	0&0 & 1& 2 & 2 & 2 & 1 \\
	\hline	
	\end{tabular}
\end{array} $$
\end{flushleft}
\normalsize

$(5)\,\bullet \,\mg=(0,0,12,13,14,15+23)$:  					
 	\begin{flushleft}	
\tiny $$\begin{array}{cccc}
E_0 & E_1 & E_2 & E_3=E_\infty \\
\\
\begin{tabular}{|c|c|c|c|c|c|c|}
	\hline
	1&2	&1&	0&	0&	0&	0\\
	\hline
0&1&	2&	1&	0&	0&	0\\
\hline
0&1&	3&	3&	1&	0&	0\\
\hline
0&1&	4&	6&	4&	1&	0\\
\hline
0&1&	5&	10&	10&	5&	1\\
  \hline
	\end{tabular}
&

\begin{tabular}{|c|c|c|c|c|c|c|}
	\hline
1&2&1&0&0&0&0\\
\hline
0&1&2&1&0&0&0\\
\hline
0&1&2&2&1&0&0\\
\hline
0&1&2&2&2&1&0\\
\hline
0&1&2&3&3&2&1\\
  \hline
	\end{tabular}
&
\begin{tabular}{|c|c|c|c|c|c|c|}
	\hline
1&2&0&0&0&0&0\\
\hline
0&0&1&0&0&0&0\\
\hline
0&0&0&2&0&0&0\\
\hline
0&0&1&1&1&0&0\\
\hline
0&0&2&2&2&2&1\\
  \hline
	\end{tabular}
&
	\begin{tabular}{|c|c|c|c|c|c|c|}
	\hline
	1&2 & 0 & 0 & 0 & 0 & 0 \\
	\hline
	0&0 & 1 & 0 & 0 & 0 & 0 \\
	\hline
	0&0 & 0 & 1 & 0 & 0 & 0 \\
	\hline
	0&0 & 1 & 1 & 1 & 0 & 0 \\
	\hline
	0&0 & 1  & 2 & 2 & 2 & 1 \\
	\hline	
	\end{tabular}
\end{array} $$
\end{flushleft}
\normalsize

$(6)\,\bullet \,\mg=(0,0,12,13,23,14)$: 
\begin{flushleft}
\tiny $$\begin{array}{ccc}
E_0 & E_1 & E_2=E_\infty \\
\\
\begin{tabular}{|c|c|c|c|c|c|c|}
	\hline
	1&2&	1&0&0&0&0\\
\hline
0&1&2&1&0&0&0\\
\hline
0&2&7&9&5&1&0\\
\hline
0&1&5&10&10&5&1 \\
  \hline
	\end{tabular}
&

\begin{tabular}{|c|c|c|c|c|c|c|}
	\hline
	1&2&1&0&0&0&0\\
	\hline
0&1&2&1&0&0&0\\
\hline
0&2&4&3&2&1&0\\
\hline
0&1&2&3&3&2&1\\
  \hline
	\end{tabular}
&
\begin{tabular}{|c|c|c|c|c|c|c|}
	\hline
1&2&0&0&0&0&0\\
\hline
0&0&0&0&0&0&0\\
\hline
0&0&2&3&2&0&0\\
\hline
0&0&2&3&2&2&1\\
  \hline
	\end{tabular}
\end{array} $$
\end{flushleft}
\normalsize

$(7)\,\bullet \,\mg=(0,0,12,13,23,14-25)$: \begin{flushleft}
\tiny $$\begin{array}{ccc}
E_0 & E_1 & E_2=E_\infty \\
\\
\begin{tabular}{|c|c|c|c|c|c|c|}
	\hline
	1&2&	1&0&0&0&0\\
\hline
0&1&2&1&0&0&0\\
\hline
0&2&7&9&5&1&0\\
\hline
0&1&5&10&10&5&1 \\
  \hline
	\end{tabular}
&

\begin{tabular}{|c|c|c|c|c|c|c|}
	\hline
	1&2&1&0&0&0&0\\
	\hline
0&1&2&1&0&0&0\\
\hline
0&2&4&3&2&1&0\\
\hline
0&1&2&3&3&2&1\\
  \hline
	\end{tabular}
&
\begin{tabular}{|c|c|c|c|c|c|c|}
	\hline
1&2&0&0&0&0&0\\
\hline
0&0&0&0&0&0&0\\
\hline
0&0&2&3&2&0&0\\
\hline
0&0&2&3&2&2&1\\
  \hline
	\end{tabular}
\end{array} $$
\end{flushleft}
\normalsize
\pagebreak
 		$(8)\,\bullet \,\mg=(0,0,12,13,23,14+25)$:  		
\begin{flushleft}
\tiny $$\begin{array}{ccc}
E_0 & E_1 & E_2=E_\infty \\
\\
\begin{tabular}{|c|c|c|c|c|c|c|}
	\hline
	1&2&	1&0&0&0&0\\
\hline
0&1&2&1&0&0&0\\
\hline
0&2&7&9&5&1&0\\
\hline
0&1&5&10&10&5&1 \\
  \hline
	\end{tabular}
&

\begin{tabular}{|c|c|c|c|c|c|c|}
	\hline
	1&2&1&0&0&0&0\\
	\hline
0&1&2&1&0&0&0\\
\hline
0&2&4&3&2&1&0\\
\hline
0&1&2&3&3&2&1\\
  \hline
	\end{tabular}
&
\begin{tabular}{|c|c|c|c|c|c|c|}
	\hline
1&2&0&0&0&0&0\\
\hline
0&0&0&0&0&0&0\\
\hline
0&0&2&3&2&0&0\\
\hline
0&0&2&3&2&2&1\\
  \hline
	\end{tabular}
\end{array} $$
\end{flushleft}
\normalsize

$(9)\,\bullet \,\mg=(0,0,0,12,14-23,15+34)$: \begin{flushleft}
\tiny $$\begin{array}{cccc}
E_0 & E_1 & E_2&E_3=E_\infty \\
\\
\begin{tabular}{|c|c|c|c|c|c|c|}
	\hline
	1&3&	3&1&0&0&0\\
\hline
0&1&3&3&1&0&0\\
\hline
0&1&4&6&4&1&0\\
\hline
0&1&5&10&10&5&1 \\
  \hline
	\end{tabular}
&

\begin{tabular}{|c|c|c|c|c|c|c|}
	\hline
1&3&3&1&0&0&0\\
	\hline
0&1&3&3&1&0&0\\
\hline
0&1&3&4&3&1&0\\
\hline
0&1&3&4&4&3&1\\
  \hline
	\end{tabular}
&
\begin{tabular}{|c|c|c|c|c|c|c|}
	\hline
1&3&2&0&0&0&0\\
\hline
0&0&1&1&0&0&0\\
\hline
0&0&0&2&1&0&0\\
\hline
0&0&2&2&3&3&1\\
  \hline
	\end{tabular}
&
\begin{tabular}{|c|c|c|c|c|c|c|}
	\hline
1&3&2&0&0&0&0\\
\hline
0&0&1&0&0&0&0\\
\hline
0&0&0&2&1&0&0\\
\hline
0&0&1&2&3&3&1\\
  \hline
	\end{tabular}
\end{array} $$
\end{flushleft}
\normalsize

$(10)\,\bullet \,\mg=(0,0,0,12,14,15+23)$: \begin{flushleft}
\tiny $$\begin{array}{ccc}
E_0 & E_1 & E_2=E_\infty \\
\\
\begin{tabular}{|c|c|c|c|c|c|c|}
	\hline
	1&3&	3&1&0&0&0\\
\hline
0&1&3&3&1&0&0\\
\hline
0&1&4&6&4&1&0\\
\hline
0&1&5&10&10&5&1 \\
  \hline
	\end{tabular}
&

\begin{tabular}{|c|c|c|c|c|c|c|}
	\hline
	1&3&3&1&0&0&0\\
	\hline
0&1&3&3&1&0&0\\
\hline
0&1&3&4&3&1&0\\
\hline
0&1&3&4&4&3&1\\
  \hline
	\end{tabular}
&
\begin{tabular}{|c|c|c|c|c|c|c|}
	\hline
1&3&2&0&0&0&0\\
\hline
0&0&1&1&0&0&0\\
\hline
0&0&0&2&2&0&0\\
\hline
0&0&2&3&3&3&1\\
  \hline
	\end{tabular}
\end{array} $$
\end{flushleft}
\normalsize

$(11)\,\bullet \,\mg=(0,0,0,12,14,15+23+24)$: 
\begin{flushleft}
\tiny $$\begin{array}{ccc}
E_0 & E_1 & E_2=E_\infty \\
\\
\begin{tabular}{|c|c|c|c|c|c|c|}
	\hline
	1&3&	3&1&0&0&0\\
\hline
0&1&3&3&1&0&0\\
\hline
0&1&4&6&4&1&0\\
\hline
0&1&5&10&10&5&1 \\
  \hline
	\end{tabular}
&

\begin{tabular}{|c|c|c|c|c|c|c|}
	\hline
	1&3&3&1&0&0&0\\
	\hline
0&1&3&3&1&0&0\\
\hline
0&1&3&4&3&1&0\\
\hline
0&1&3&4&4&3&1\\
  \hline
	\end{tabular}
&
\begin{tabular}{|c|c|c|c|c|c|c|}
	\hline
1&3&2&0&0&0&0\\
\hline
0&0&1&1&0&0&0\\
\hline
0&0&0&2&2&0&0\\
\hline
0&0&2&3&3&3&1\\
  \hline
	\end{tabular}
\end{array} $$
\end{flushleft}
\normalsize

$(12)\,\bullet \,\mg=(0,0,0,12,14,15+24)$: $\mg=\R e_3\oplus \mh$ where $\mh$ is the Lie algebra (1) of dimension 5. 

\begin{flushleft}
\tiny $$\begin{array}{ccc}
E_0 & E_1 & E_2=E_\infty \\
\\
\begin{tabular}{|c|c|c|c|c|c|c|}
	\hline
	1&3&	3&1&0&0&0\\
\hline
0&1&3&3&1&0&0\\
\hline
0&1&4&6&4&1&0\\
\hline
0&1&5&10&10&5&1 \\
  \hline
	\end{tabular}
&

\begin{tabular}{|c|c|c|c|c|c|c|}
	\hline
	1&3&3&1&0&0&0\\
	\hline
0&1&3&3&1&0&0\\
\hline
0&1&3&4&3&1&0\\
\hline
0&1&3&4&4&3&1\\
  \hline
	\end{tabular}
&
\begin{tabular}{|c|c|c|c|c|c|c|}
	\hline
1&3&2&0&0&0&0\\
\hline
0&0&1&1&0&0&0\\
\hline
0&0&0&2&2&0&0\\
\hline
0&0&2&3&3&3&1\\
  \hline
	\end{tabular}
\end{array} $$
\end{flushleft}
\normalsize

$(13)\,\bullet \,\mg=(0,0,0,12,14,15)$: $\mg=\R e_3\oplus \mh$ where $\mh$ is the Lie algebra (2) of dimension 5.

\begin{flushleft}
\tiny $$\begin{array}{ccc}
E_0 & E_1 & E_23=E_\infty \\
\\
\begin{tabular}{|c|c|c|c|c|c|c|}
	\hline
	1&3&	3&1&0&0&0\\
\hline
0&1&3&3&1&0&0\\
\hline
0&1&4&6&4&1&0\\
\hline
0&1&5&10&10&5&1 \\
  \hline
	\end{tabular}
&

\begin{tabular}{|c|c|c|c|c|c|c|}
	\hline
	1&3&3&1&0&0&0\\
	\hline
0&1&3&3&1&0&0\\
\hline
0&1&3&4&3&1&0\\
\hline
0&1&3&4&4&3&1\\
  \hline
	\end{tabular}
&
\begin{tabular}{|c|c|c|c|c|c|c|}
	\hline
1&3&2&0&0&0&0\\
\hline
0&0&1&1&0&0&0\\
\hline
0&0&0&2&2&0&0\\
\hline
0&0&2&3&3&3&1\\
  \hline
	\end{tabular}
\end{array} $$
\end{flushleft}
\normalsize

$(14)\,\bullet \,\mg=(0,0,0,12,13,14+35)$: 
\begin{flushleft}
\tiny $$\begin{array}{ccc}
E_0 & E_1 & E_2=E_\infty \\
\\
\begin{tabular}{|c|c|c|c|c|c|c|}
	\hline
1&3&3&1&0&0&0\\
\hline
0&2&7&9&5&1&0\\
\hline
0&1&5&10&10&5&1\\
  \hline
	\end{tabular}
&

\begin{tabular}{|c|c|c|c|c|c|c|}
	\hline
1&3&	3&	1&	0&	0&	0\\
\hline
0&2	&6&	6&	3&	1&	0\\
\hline
0&1&	3&	6&	6&	3&	1\\
  \hline
	\end{tabular}
&
\begin{tabular}{|c|c|c|c|c|c|c|}
	\hline
	1&3 & 1 & 0 & 0 & 0 &  0\\
	\hline
	0&0 & 4 & 3 & 0 & 0 & 0 \\
	\hline
	 0&0&0 & 3 & 5 & 3 & 1 \\
	\hline
	\end{tabular}
\end{array} $$
\end{flushleft}
\normalsize

$(15)\,\bullet \,\mg=(0,0,0,12,23,14+35)$: $\mg$ is the Lie algebra of upper $4\times 4$ real matrices.\begin{flushleft}
\tiny $$\begin{array}{ccc}
E_0 & E_1 & E_2=E_\infty \\
\\
\begin{tabular}{|c|c|c|c|c|c|c|}
	\hline
1&3&3&1&0&0&0\\
\hline
0&2&7&9&5&1&0\\
\hline
0&1&5&10&10&5&1\\
  \hline
	\end{tabular}
&

\begin{tabular}{|c|c|c|c|c|c|c|}
	\hline
1&3&	3&	1&	0&	0&	0\\
\hline
0&2	&6&	6&	3&	1&	0\\
\hline
0&1&	3&	6&	6&	3&	1\\
  \hline
	\end{tabular}
&
\begin{tabular}{|c|c|c|c|c|c|c|}
	\hline
	1&3 & 1 & 0 & 0 & 0 &  0\\
	\hline
	0&0 & 4 & 3 & 0 & 0 & 0 \\
	\hline
	 0&0& 0 & 3 & 5 & 3 & 1 \\
	\hline
	\end{tabular}
\end{array} $$
\end{flushleft}
\normalsize
		\pagebreak
		$(16)\,\bullet \,\mg=(0,0,0,12,23,14-35)$: 
\begin{flushleft}
\tiny $$\begin{array}{ccc}
E_0 & E_1 & E_2=E_\infty \\
\\
\begin{tabular}{|c|c|c|c|c|c|c|}
	\hline
1&3&3&1&0&0&0\\
\hline
0&2&7&9&5&1&0\\
\hline
0&1&5&10&10&5&1\\
  \hline
	\end{tabular}
&

\begin{tabular}{|c|c|c|c|c|c|c|}
	\hline
1&3&	3&	1&	0&	0&	0\\
\hline
0&2	&6&	6&	3&	1&	0\\
\hline
0&1&	3&	6&	6&	3&	1\\
  \hline
	\end{tabular}
&
\begin{tabular}{|c|c|c|c|c|c|c|}
	\hline
	1&3 & 1 & 0 & 0 & 0 &  0\\
	\hline
	0&0 & 4 & 3 & 0 & 0 & 0 \\
	\hline
	 0&0& 0& 3 & 5 & 3 & 1 \\
	\hline
	\end{tabular}
\end{array} $$
\end{flushleft}
\normalsize

    $(17)\,\bullet \,\mg=(0,0,0,12,14,24)$:$\mg=\R e_3\oplus \mh$ where $\mh$ is the Lie algebra (3) of dimension 5. 
 		\begin{flushleft}
\tiny $$\begin{array}{ccc}
E_0 & E_1 & E_2=E_\infty \\
\\
\begin{tabular}{|c|c|c|c|c|c|c|}
	\hline
1&3&3&1&0&0&0\\
\hline
0&1&3&3&1&0&0\\
\hline
0&2&9&16&14&6&1\\
  \hline
	\end{tabular}
&

\begin{tabular}{|c|c|c|c|c|c|c|}
	\hline
1&3&3&1&0&0&0\\
\hline
0&1&3&3&1&0&0\\
\hline
0&2&6&7&5&3&1\\
  \hline
	\end{tabular}
&
\begin{tabular}{|c|c|c|c|c|c|c|}
	\hline
	1&3 & 2 & 0 & 0 & 0 &  0\\
	\hline
	0&0 & 0 & 0 & 0 & 0 & 0 \\
	\hline
	 0&0& 3 & 6 & 5 & 3 & 1 \\
	\hline
	\end{tabular}

\end{array} $$
\end{flushleft}
\normalsize

	$(18)\,\bullet \,\mg=(0,0,0,12,13-24,14+23)$: 	
 		\begin{flushleft}
\tiny $$\begin{array}{ccc}
E_0 & E_1 & E_2=E_\infty \\
\\
\begin{tabular}{|c|c|c|c|c|c|c|}
	\hline
1&3&3&1&0&0&0\\
\hline
0&1&3&3&1&0&0\\
\hline
0&2&9&16&14&6&1\\
  \hline
	\end{tabular}
&

\begin{tabular}{|c|c|c|c|c|c|c|}
	\hline
1&3&3&1&0&0&0\\
\hline
0&1&3&3&1&0&0\\
\hline
0&2&6&7&5&3&1\\
  \hline
	\end{tabular}
&
\begin{tabular}{|c|c|c|c|c|c|c|}
	\hline
	1&3 & 2 & 0 & 0 & 0 &  0\\
	\hline
	0&0 & 0 & 0 & 0 & 0 & 0 \\
	\hline
	 0&0&3 & 6 & 5 & 3 & 1 \\
	\hline
	\end{tabular}

\end{array} $$
\end{flushleft}
\normalsize

$(19)\,\bullet \,\mg=(0,0,0,12,14,13-24)$: 	 		\begin{flushleft}
\tiny $$\begin{array}{ccc}
E_0 & E_1 & E_2=E_\infty \\
\\
\begin{tabular}{|c|c|c|c|c|c|c|}
	\hline
1&3&3&1&0&0&0\\
\hline
0&1&3&3&1&0&0\\
\hline
0&2&9&16&14&6&1\\
  \hline
	\end{tabular}
&

\begin{tabular}{|c|c|c|c|c|c|c|}
	\hline
1&3&3&1&0&0&0\\
\hline
0&1&3&3&1&0&0\\
\hline
0&2&6&7&5&3&1\\
  \hline
	\end{tabular}
&
\begin{tabular}{|c|c|c|c|c|c|c|}
	\hline
	1&3 & 2 & 0 & 0 & 0 &  0\\
	\hline
	0&0 & 0 & 0 & 0 & 0 & 0 \\
	\hline
	 0&0&3 & 6 & 5 & 3 & 1 \\
	\hline
	\end{tabular}

\end{array} $$
\end{flushleft}
\normalsize

$(20)\,\bullet \,\mg=(0,0,0,12,13+14,24)$: 	 		\begin{flushleft}
\tiny $$\begin{array}{ccc}
E_0 & E_1 & E_2=E_\infty \\
\\
\begin{tabular}{|c|c|c|c|c|c|c|}
	\hline
1&3&3&1&0&0&0\\
\hline
0&1&3&3&1&0&0\\
\hline
0&2&9&16&14&6&1\\
  \hline
	\end{tabular}
&

\begin{tabular}{|c|c|c|c|c|c|c|}
	\hline
1&3&3&1&0&0&0\\
\hline
0&1&3&3&1&0&0\\
\hline
0&2&6&7&5&3&1  \\
  \hline
	\end{tabular}
&
\begin{tabular}{|c|c|c|c|c|c|c|}
	\hline
1&3&2&0&0&0&0\\
\hline
0&0&0&0&0&0&0\\
\hline
0&0&3&6&5&3&1  \\
  \hline
	\end{tabular}
\end{array} $$
\end{flushleft}
\normalsize

$(21)\,\bullet \,\mg=(0,0,0,12,13,14+23)$: 		\begin{flushleft}
\tiny $$\begin{array}{ccc}
E_0 & E_1 & E_2=E_\infty \\
\\
\begin{tabular}{|c|c|c|c|c|c|c|}
	\hline
1&3&3&1&0&0&0\\
\hline
0&2&7&9&5&1&0\\
\hline
0&1&5&10&10&5&1\\
  \hline
	\end{tabular}
&

\begin{tabular}{|c|c|c|c|c|c|c|}
	\hline
1&3&3&1&0&0&0\\
\hline
0&2&6&6&3&1&0\\
\hline
0&1&3&6&6&3&1\\
  \hline
	\end{tabular}
&
\begin{tabular}{|c|c|c|c|c|c|c|}
	\hline
	1&3 & 1 & 0 & 0 & 0 &  0\\
	\hline
	0&0 & 4 & 4 & 1 & 0 & 0 \\
	\hline
	 0&0&1 & 4 & 5 & 3 & 1 \\
	\hline
	\end{tabular}
\end{array} $$
\end{flushleft}
\normalsize

$(22)\,\bullet \,\mg=(0,0,0,12,13,24)$: 			\begin{flushleft}
\tiny $$\begin{array}{ccc}
E_0 & E_1 & E_2=E_\infty \\
\\
\begin{tabular}{|c|c|c|c|c|c|c|}
	\hline
1&3&3&1&0&0&0\\
\hline
0&2&7&9&5&1&0\\
\hline
0&1&5&10&10&5&1\\
  \hline
	\end{tabular}
&

\begin{tabular}{|c|c|c|c|c|c|c|}
	\hline
1&3&3&1&0&0&0\\
\hline
0&2&6&6&3&1&0\\
\hline
0&1&3&6&6&3&1\\
  \hline
	\end{tabular}
&
\begin{tabular}{|c|c|c|c|c|c|c|}
	\hline
	1&3 & 1 & 0 & 0 & 0 &  0\\
	\hline
	0&0 & 4 & 4 & 1 & 0 & 0 \\
	\hline
	 0&0&1 & 4 & 5 & 3 & 1 \\
	\hline
	\end{tabular}
\end{array} $$
\end{flushleft}
\normalsize

$(23)\,\bullet \,\mg=(0,0,0,12,13,14)$: 
 		\begin{flushleft}
\tiny $$\begin{array}{ccc}
E_0 & E_1 & E_2 =E_\infty \\
\\
\begin{tabular}{|c|c|c|c|c|c|c|}
	\hline
1&3&3&1&0&0&0\\
\hline
0&2&7&9&5&1&0\\
\hline
0&1&5&10&10&5&1\\
  \hline
	\end{tabular}
&

\begin{tabular}{|c|c|c|c|c|c|c|}
	\hline
1&3&3&1&0&0&0\\
\hline
0&2&6&6&3&1&0\\
\hline
0&1&3&6&6&3&1\\
  \hline
	\end{tabular}
&
\begin{tabular}{|c|c|c|c|c|c|c|}
	\hline
	1&3 & 1 & 0 & 0 & 0 &  0\\
	\hline
	0&0 & 4 & 4 & 1 & 0 & 0 \\
	\hline
	 0&0&1 & 4 & 5 & 3 & 1 \\
	\hline
	\end{tabular}
\end{array} $$
\end{flushleft}
\normalsize

$(24)\,\bullet \,\mg=(0,0,0,12,13,23)$: 
\begin{flushleft}
\tiny $$\begin{array}{ccc}
E_0 & E_1 & E_2=E_\infty \\
\\
\begin{tabular}{|c|c|c|c|c|c|c|}
	\hline
	1&3&3&1&0&0&0\\
	\hline
0&3&12&19&15&6&1\\
  \hline
	\end{tabular}
&

\begin{tabular}{|c|c|c|c|c|c|c|}
	\hline
	1&3&3&1&0&0&0\\
	\hline
0&3&9&12&8&3&1\\
  \hline
	\end{tabular}
&
	\begin{tabular}{|c|c|c|c|c|c|c|}
	\hline
	1&3 & 0 & 0 & 0 & 0 &  0\\
	\hline
	0&0 & 8 & 12& 8 & 3 & 1 \\
	\hline
	\end{tabular}
\end{array} $$
\end{flushleft}
\normalsize
\pagebreak
$(25)\,\bullet \,\mg=(0,0,0,0,12,15+34)$: 

 		\begin{flushleft}
\tiny $$\begin{array}{cccc}
E_0 & E_1 & E_2& E_3=E_\infty \\
\\
\begin{tabular}{|c|c|c|c|c|c|c|}
	\hline
	1&4&6&4&1&0&0\\
	\hline
0&1&4&6&4&1&0\\
\hline
0&1&4&10&10&4&1\\
  \hline
	\end{tabular}
&

\begin{tabular}{|c|c|c|c|c|c|c|}
	\hline
	1&4&6&4&1&0&0\\
	\hline
0&1&4&6&4&1&0\\
\hline
0&1&4&7&7&4&1\\
  \hline
	\end{tabular}
&
	\begin{tabular}{|c|c|c|c|c|c|c|}
	\hline
1&4&5&2&0&0&0\\
\hline
0&0&1&2&0&0&0\\
\hline
0&0&1&3&6&4&1\\
\hline
	\end{tabular}

&
	\begin{tabular}{|c|c|c|c|c|c|c|}
	\hline
	1&4 & 5 & 1 & 0 & 0 &  0\\
	\hline
	0&0 & 1 & 2 & 0 & 0 & 0 \\
	\hline
	 0&0& 0 & 3 & 6 & 4 & 1 \\
	\hline
	\end{tabular}

\end{array} $$
\end{flushleft}
\normalsize

$(26)\,\bullet \,\mg=(0,0,0,0,12,15)$: $\mg=\R e_3\oplus \R e_4\oplus \mh$ where $\mh$ is the 3-step nilpotent Lie algebra of dimension four.
 		\begin{flushleft}
\tiny $$\begin{array}{ccc}
E_0 & E_1 & E_2=E_\infty \\
\\
\begin{tabular}{|c|c|c|c|c|c|c|}
	\hline
	1&4&6&4&1&0&0\\
	\hline
0&1&4&6&4&1&0\\
\hline
0&1&5&10&10&4&1\\
  \hline
	\end{tabular}
&

\begin{tabular}{|c|c|c|c|c|c|c|}
	\hline
	1&4&6&4&1&0&0\\
	\hline
0&1&4&6&4&1&0\\
\hline
0&1&4&7&7&4&1\\
  \hline
	\end{tabular}
&
	\begin{tabular}{|c|c|c|c|c|c|c|}
	\hline
1&4&5&2&0&0&0\\
\hline
0&0&1&2&1&0&0\\
\hline
0&0&1&4&6&4&1\\
\hline
	\end{tabular}

\end{array} $$
\end{flushleft}
\normalsize


$(27)\,\bullet \,\mg=(0,0,0,0,12,14+25)$:  		\begin{flushleft}
\tiny $$\begin{array}{ccc}
E_0 & E_1 & E_2=E_\infty \\
\\
\begin{tabular}{|c|c|c|c|c|c|c|}
	\hline
	1&4&6&4&1&0&0\\
	\hline
0&1&4&6&4&1&0\\
\hline
0&1&5&10&10&5&1\\
  \hline
	\end{tabular}
&

\begin{tabular}{|c|c|c|c|c|c|c|}
	\hline
	1&4&6&4&1&0&0\\
	\hline
0&1&4&6&4&1&0\\
\hline
0&1&4&7&7&4&1\\
  \hline
	\end{tabular}
&
	\begin{tabular}{|c|c|c|c|c|c|c|}
	\hline
1&4&5&2&0&0&0\\
\hline
0&0&1&2&1&0&0\\
\hline
0&0&1&4&6&4&1\\
\hline
	\end{tabular}

\end{array} $$
\end{flushleft}
\normalsize

 		$(28)\,\bullet \,\mg=(0,0,0,0,13+42,14+23)$: 

\begin{flushleft}
\tiny $$\begin{array}{ccc}
E_0 & E_1 & E_2=E_\infty \\
\\
\begin{tabular}{|c|c|c|c|c|c|c|}
	\hline
	1&4&6&4&1&0&0\\
	\hline
0&2&9&16&14&6&1\\
  \hline
	\end{tabular}
&

\begin{tabular}{|c|c|c|c|c|c|c|}
	\hline
	1&4&6&4&1&0&0\\
	\hline
0&2&8&11&8&4&1\\
  \hline
	\end{tabular}
&
	\begin{tabular}{|c|c|c|c|c|c|c|}
	\hline
	1&4 & 4 & 0 & 0 & 0 &  0\\
	\hline
	0&0 & 4 & 10& 8 & 4 & 1 \\
	\hline
	\end{tabular}
\end{array} $$
\end{flushleft}
\normalsize

	$(29)\,\bullet \,\mg=(0,0,0,0,12,14+23)$:
\begin{flushleft}
\tiny $$\begin{array}{ccc}
E_0 & E_1 & E_2=E_\infty \\
\\
\begin{tabular}{|c|c|c|c|c|c|c|}
	\hline
	1&4&6&4&1&0&0\\
	\hline
0&2&9&16&14&6&1\\
  \hline
	\end{tabular}
&

\begin{tabular}{|c|c|c|c|c|c|c|}
	\hline
	1&4&6&4&1&0&0\\
	\hline
0&2&8&11&8&4&1\\
  \hline
	\end{tabular}
&
	\begin{tabular}{|c|c|c|c|c|c|c|}
	\hline
	1&4 & 4 & 0 & 0 & 0 &  0\\
	\hline
	0&0 & 4 & 10& 8 & 4 & 1 \\
	\hline
	\end{tabular}
\end{array} $$
\end{flushleft}
\normalsize

 		$(30)\,\bullet \,\mg=(0,0,0,0,12,34)$: 
\begin{flushleft}
\tiny $$\begin{array}{ccc}
E_0 & E_1 & E_2=E_\infty \\
\\
\begin{tabular}{|c|c|c|c|c|c|c|}
	\hline
	1&4&6&4&1&0&0\\
	\hline
0&2&9&16&14&6&1\\
  \hline
	\end{tabular}
&

\begin{tabular}{|c|c|c|c|c|c|c|}
	\hline
	1&4&6&4&1&0&0\\
	\hline
0&2&8&11&8&4&1\\
  \hline
	\end{tabular}
&
	\begin{tabular}{|c|c|c|c|c|c|c|}
	\hline
	1&4 & 4 & 0 & 0 & 0 &  0\\
	\hline
	0&0 & 4 & 10& 8 & 4 & 1 \\
	\hline
	\end{tabular}
\end{array} $$
\end{flushleft}
\normalsize		

$(31)\,\bullet \,\mg=(0,0,0,0,12,13)$: $\mg=\R e_4\oplus \mh$ where $\mh$ is the Lie algebra (6) of dimension 5.

\begin{flushleft}
\tiny $$\begin{array}{ccc}
E_0 & E_1 & E_2=E_\infty \\
\\
\begin{tabular}{|c|c|c|c|c|c|c|}
	\hline
	1&4&6&4&1&0&0\\
	\hline
0&2&9&16&14&6&1\\
  \hline
	\end{tabular}
&

\begin{tabular}{|c|c|c|c|c|c|c|}
	\hline
	1&4&6&4&1&0&0\\
	\hline
0&2&8&12&9&4&1\\
  \hline
	\end{tabular}
&
	\begin{tabular}{|c|c|c|c|c|c|c|}
	\hline
	1&4 & 4 & 1 & 0 & 0 &  0\\
	\hline
	0&0 &5& 11& 9 & 4 & 1 \\
	\hline
	\end{tabular}
\end{array} $$
\end{flushleft}
\normalsize		

 		$(32)\,\bullet \,\mg=(0,0,0,0,0,12+34)$: $\mg=\R e_5\oplus \mh$ where $\mh$ is the Lie algebra (8) of dimension 5. 
 		\begin{flushleft}
\tiny $$\begin{array}{ccc}
E_0 & E_1 & E_2=E_\infty \\
\\
\begin{tabular}{|c|c|c|c|c|c|c|}
	\hline
1&5&10&10&5&1&0\\
\hline
0&1&5&10&10&5&1\\
  \hline
	\end{tabular}
&

\begin{tabular}{|c|c|c|c|c|c|c|}
	\hline
1&5&10&10&5&1&0\\
\hline
0&1&5&10&10&5&1\\
  \hline
	\end{tabular}
&
	\begin{tabular}{|c|c|c|c|c|c|c|}
	\hline
	1&5 &9 & 5 & 0 & 0 &  0\\
	\hline
	0&0 & 0 & 5& 9 & 5 & 1 \\
	\hline
	\end{tabular}
\end{array} $$
\end{flushleft}
\normalsize

$(33)\,\bullet \,\mg=(0,0,0,0,0,12)$:$\mg=\R e_3\oplus\R e_4\oplus \mh$ where $\mh$ is the nilpotent Lie algebra or dimension 3.
\begin{flushleft}
\tiny $$\begin{array}{ccc}
E_0 & E_1 & E_2=E_\infty \\
\\
\begin{tabular}{|c|c|c|c|c|c|c|}
	\hline
	1&5&10&10&5&1&0\\
	\hline
0&1&5&10&10&5&1\\
  \hline
	\end{tabular}
&

\begin{tabular}{|c|c|c|c|c|c|c|}
	\hline
	1&5&10&10&5&1&0\\
	\hline
0&1&5&10&10&5&1\\
  \hline
	\end{tabular}
&
	\begin{tabular}{|c|c|c|c|c|c|c|}
	\hline
	1&5 & 9 & 7 & 2 & 0 &  0\\
	\hline
	0&0 & 2 & 7 & 9 & 5 & 1 \\
	\hline
	\end{tabular}
\end{array} $$
\end{flushleft}
\normalsize		

\bigskip
\bigskip

From simple inspection of the tables of the limit of the spectral sequence showed above we can see that in dimension five there are 6 different configurations of tables that correspond to 8 isomorphisms classes of nilpotent Lie algebras in that dimension. Meanwhile, there are 15 diffe\-rent configurations of tables that correspond to 33 isomorphisms classes of nilpotent Lie algebras in dimension six. Thus, there are non-isomorphic nilpotent Lie algebras with the same table. Nevertheless, notice that the Lie algebras 16 and 17 of dimension six have the same Betti numbers but different tables.

\bigskip

\end{document}